\newtheorem{theorem}{Theorem}[section]       
\newtheorem{question}[theorem]{Question}    
\newtheorem{lemma}[theorem]{Lemma}
\newtheorem{proposition}[theorem]{Proposition}
\newtheorem{definition}[theorem]{Definition}
 \def\N{{\mathcal N}}
 \def\C{{\mathcal C}}
\def\H{{\mathcal H}}
 \def\ab{{\bold a}}
 \def\bb{{\bold b}}
 \def\cb{{\bold c}}
  \def\1b{{\bold 1}}
 \def\opn#1#2{\def#1{\operatorname{#2}}} 
 \opn\chara{char} \opn\length{\ell} \opn\pd{pd} \opn\rk{rk}
 \opn\projdim{proj\,dim} \opn\injdim{inj\,dim} \opn\rank{rank}
 \opn\depth{depth} \opn\grade{grade} \opn\height{height}
 \opn\embdim{emb\,dim} \opn\codim{codim}
 \opn\Tr{Tr} \opn\bigrank{big\,rank}
 \opn\superheight{superheight}\opn\lcm{lcm}
 \opn\trdeg{tr\,deg}
 \opn\reg{reg} \opn\lreg{lreg} \opn\ini{in} \opn\lpd{lpd}
 \opn\size{size} \opn\sdepth{sdepth}
 \opn\link{link}\opn\fdepth{fdepth}\opn\lex{lex}
 \opn\div{div} \opn\Div{Div} \opn\cl{cl} \opn\Cl{Cl}
 \opn\gr{gr}\opn\bight{bight}
 \opn\Spec{Spec} \opn\Supp{Supp} \opn\supp{supp} \opn\Sing{Sing}
 \opn\Ass{Ass} \opn\Min{Min}\opn\Mon{Mon}
 \opn\Ann{Ann} \opn\Rad{Rad} \opn\Soc{Soc}
 \opn\Im{Im} \opn\Ker{Ker} \opn\Coker{Coker} \opn\Am{Am}
 \opn\Hom{Hom} \opn\Tor{Tor} \opn\Ext{Ext} \opn\End{End}
 \opn\Aut{Aut} \opn\id{id}
 \opn\nat{nat}
 \opn\pff{pf}
 \opn\Pf{Pf} \opn\GL{GL} \opn\SL{SL} \opn\mod{mod} \opn\ord{ord}
 \opn\Gin{Gin} \opn\Hilb{Hilb}\opn\sort{sort}
 \opn\aff{aff} \opn
\opn\relint{relint} \opn\st{st}
 \opn\lk{lk} \opn\cn{cn} \opn\core{core} \opn\vol{vol}
 \opn\link{link} \opn\star{star}\opn\lex{lex}\opn\set{set}
 \opn\gr{gr}
 \def\pot#1#2{#1[\kern-0.28ex[#2]\kern-0.28ex]}
 \opn\dirlim{\underrightarrow{\lim}}
 \opn\inivlim{\underleftarrow{\lim}}
 \def\Implies{\ifmmode\Longrightarrow \else
         \unskip${}\Longrightarrow{}$\ignorespaces\fi}
 \def\implies{\ifmmode\Rightarrow \else
         \unskip${}\Rightarrow{}$\ignorespaces\fi}
 \def\iff{\ifmmode\Longleftrightarrow \else
         \unskip${}\Longleftrightarrow{}$\ignorespaces\fi}
 \newtheorem{Theorem}{Theorem}[section]
 \newtheorem{Proposition}[Theorem]{Proposition}
 \newtheorem{Remark}[Theorem]{Remark}
 \newtheorem{Definition}[Theorem]{Definition}
 \let\epsilon\varepsilon
 \let\kappa=\varkappa
 \def\qed{\ifhmode\textqed\fi
       \ifmmode\ifinner\quad\qedsymbol\else\dispqed\fi\fi}
 \def\textqed{\unskip\nobreak\penalty50
        \hskip2em\hbox{}\nobreak\hfil\qedsymbol
        \parfillskip=0pt \finalhyphendemerits=0}
 \def\dispqed{\rlap{\qquad\qedsymbol}}
 \opn\dis{dis}
 \def\pnt{{\raise0.5mm\hbox{\large\bf.}}}
 \opn\Lex{Lex}
\begin{document}
\author[M. Nasernejad and  A. A. Qureshi]{Mehrdad ~ Nasernejad$^{1}$ and   Ayesha Asloob Qureshi$^{2,*}$}
\title[Neighborhood hypergraphs and their transversal hypergraphs] {Algebraic implications of neighborhood hypergraphs and their transversal hypergraphs}
\subjclass[2010]{13B25, 13F20, 05C25, 05E40.} 
\keywords {Normally torsion-freeness, Closed neighborhood ideals, Dominating ideals, Strongly chordal graphs, Cycles.}

\thanks{$^*$Corresponding author}

\thanks{Ayesha Asloob Qureshi is supported by The Scientific and Technological Research Council of Turkey - T\"UBITAK (Grant No: 122F128), and acknowledge support from the ICTP through the Associates Programme (2019-2023).}

\thanks{E-mail addresses:  m\_{n}asernejad@yahoo.com  and  aqureshi@sabanciuniv.edu}  

\maketitle

\begin{center}
{\it
$^1$Univ. Artois, UR 2462, Laboratoire de Math\'{e}matique de  Lens (LML),\\
 F-62300 Lens, France  \\
 $^2$Sabanci University, Faculty of Engineering and Natural Sciences, \\
Orta Mahalle, Tuzla 34956, Istanbul, Turkey
}
\end{center}

\begin{abstract}
In this paper, we unfold balanced and totally balanced neighborhood hypergraphs to discover new classes of normally torsion-free monomial ideals. As a consequence, we establish that the closed neighborhood ideals and the dominating ideals of strongly chordal graphs are normally torsion-free. We discuss the stable sets of associated primes of the dominating ideals of cycles and characterize all the cycles with normally torsion-free dominating ideals.
\end{abstract}
\vspace{0.4cm}


\section*{Introduction}

Let $R$ be a commutative Noetherian ring and let $\mathrm{Ass}_R(R/I)$ be the set of all prime ideals associated to an ideal $I$ of $R$. An ideal $I \subset R$ is called {\em normally torsion-free} if $\mathrm{Ass}_R(R/I^k) \subseteq \mathrm{Ass}_R(R/I)$  for all $k\geq 1$.
 It is a known fact that if $I$ is a square-free monomial ideal in a polynomial ring $S=K[x_1, \ldots, x_n]$ over a field $K$, then $I$ is normally torsion-free if and only if the ordinary powers of $I$ coincide with the symbolic powers of $I$, for example see \cite[Theorem 1.4.6]{HH1}. Some of the well-known classes of normally torsion-free monomial ideals originate from graph theory. 
 Simis, Vasconcelos, and Villarreal showed in \cite{SVV}  that  a finite simple graph is bipartite if and only if its edge ideal is normally torsion-free.  In \cite{GRV}, Gitler, Reyes, and Villarreal showed that the cover ideals of bipartite graphs are normally torsion-free. 

Let $G$ be a simple graph with a vertex set $V(G)$. Given a vertex $v \in G$, the closed neighborhood of $v$ is a subset of $V(G)$ consisting of $v$ and all the vertices that are adjacent to $v$. A dominating set of $G$ is a subset of $V(G)$ that intersects  the closed neighborhoods of all $v \in V(G)$. In the 1960s, Berge and Ore mathematically formulated the concept of domination in graphs and it has been  studied widely by many researchers due to its far-reaching and growing applications in various fields including computer sciences, operations research, linear algebra, and optimization. For a given graph $G$, characterizing all the dominating sets of $G$ and finding the smallest size of a dominating set are among the central problems in the domination of graphs. We refer to \cite{HHS} for further concepts related to the domination in graphs. The definitions of closed neighborhood ideals and dominating ideals were introduced in \cite{SM}. The closed neighborhood ideal of $G$, denoted by $NI(G)$, is the square-free monomial ideal generated by the monomials that correspond to the closed neighborhoods of vertices of $G$ and the dominating ideal of $G$, denoted by $DI(G)$, is the square-free monomial ideal generated by the monomials that correspond to the dominating sets of $G$. In \cite{NQBM}, the authors discussed some classes of graphs whose closed neighborhood ideals and dominating ideals are normally torsion-free. In this paper, our motivation is to extend the work carried out in \cite{NQBM} and widely expand the classes of graphs that give normally torsion-free closed neighborhood ideals and dominating ideals.

The breakdown of the content of this paper is as follows: Section~\ref{prem} contains all the required ingredients from commutative algebra, graph theory,  and hypergraph theory. In our work, the main reference for the definitions and notions in hypergraph theory is  Berge's book  \cite{B}. In Section~\ref{section2}, we review the literature in hypergraph theory and discover some interesting classes of Mengerian hypergraphs, and in particular, we focus on the neighborhood hypergraphs of simple graphs. Normally torsion-free square-free monomial ideals  are closely related to Mengerian hypergraphs. Let $\mathcal{H}$ be a simple hypergraph. The edge ideal of $\mathcal{H}$, denoted by $I(\mathcal{H})$, is the ideal generated by the monomials corresponding to the edges of $\mathcal{H}$. A hypergraph $\mathcal{H}$ is called {\em Mengerian} if it satisfies a certain  min-max equation, which is known as the Mengerian property in hypergraph theory or  has the max-flow min-cut property in integer programming. Algebraically, it is equivalent to $I(\mathcal{H})$ being normally torsion-free, as illustrated in \cite[Corollary 10.3.15]{HH1} and \cite[Corollary 3.14]{GVV}.  The balanced hypergraphs and the totally balanced hypergraphs are among the most discussed classes of Mengerian hypergraphs. In simple words, balanced hypergraphs are those hypergraphs that avoid ``special" (hypergraphic) cycle of odd length, and the totally balanced hypergraphs are those hypergraphs that avoid ``special" (hypergraphic) cycle of any length. Their formal definitions are stated in Definition~\ref{hypercycle}. Using the results obtained in \cite{BDS}, as a consequence in Theorem \ref{trampoline}, we establish that if $G$ is a chordal graph with no induced odd incomplete trampoline, then the closed neighborhood ideal and the dominating ideal of $G$ is normally torsion-free. 

Totally balanced hypergraphs are of particular interest because they generalize the notion of forests (in the sense of graphs). Moreover, it is shown in \cite[Theorem 2.3]{HHTZ} that totally balanced hypergraphs can be identified as simplicial forests. Simplicial forests  were introduced by Faridi in \cite{Far1} and further explored in \cite{Far2}. In \cite{Fa}, Farber introduced strongly chordal graphs and proved that a graph $G$ is strongly chordal if and only if the neighborhood hypergraph of $G$ is totally balanced. Rephrasing this in the algebraic language, we conclude in Theorem~\ref{stronglychordal} that the closed neighborhood ideals  of strongly chordal graphs are normally torsion-free and sequentially Cohen-Macaulay, and their dominating ideals are normally torsion-free and componentwise linear. 

 Next, we examine the closed neighborhood ideals and dominating ideals  of cycles. The closed neighborhood ideals of cycles can be identified as $3$-path ideals (or cubic path ideals as in \cite{AB}) of cycles. To deal with $DI(C_n)$, we first review the case of $t$-path ideals $P_n$, in Section~\ref{tpath}. Here $P_n$ denotes the path on $n$ vertices. Let $\H_t(P_n)$ be the hypergraph whose edges are the paths of length $t-1$ in $P_n$ with $n \geq t$. Then $\H_t(P_n)$ is a simplicial forest as shown in \cite[Theorem 2.7]{JT}  and once again by \cite[Theorem 3.2]{HHTZ}, and we have   $\H_t(P_n)$ is totally balanced. This discussion yields a combinatorial proof of normally torsion-freeness of $t$-path ideals of path graphs. We give algebraic proofs of these statements in  Theorem~\ref{Path.NTF}.

 Section~\ref{DIofcycles} is dedicated to the dominating ideals of cycles. It is shown in \cite{AB} that the 3-path ideal of a cycle $C_n$ is normally torsion-free if and only if $n\in \{3,6,9\}$. The 3-path ideal of $C_n$ is the same as $NI(C_n)$. In the first main result of Section~\ref{DIofcycles}, we prove that 

\textbf{Theorem \ref{symbolic-cycles}.} \;
{\em Let $C_n$ be a cycle with $n\geq 3$.  Then $DI(C_n)^t=DI(C_n)^{(t)}$ for all $t \geq 1$ if and only if $n\in\{3,6,9\}$. In particular, $DI(C_n)$ is normally torsion-free if and only if $n\in\{3,6,9\}$. }
\newline

This result shows that $NI(C_n)$ is normally torsion-free if and only if $DI(C_n)$ is  normally torsion-free. The stable sets of associated primes of cover ideals of odd cycles are studied in \cite{NKA}. Inspired by \cite[Proposition 3.6]{NKA}, in Theorem~\ref{NNTF-DI(CN)}, we describe the stable sets  of associated primes of dominating ideals of cycles and prove also that $DI(C_{n})$ have the persistence property. We conclude this section
 with a short argument on the relation between  prime monomial ideals  associated to  $\mathrm{Ass}(DI(G)^s)$ for some $s \geq 1$, and prime monomial ideals associated to  $\mathrm{Ass}(DI(G_\mathfrak{p})^s)$, where  $G_\mathfrak{p}$ stands for the induced graph on $\mathfrak{p}$. Specially, we are interested in knowing  that if   $\mathfrak{p}=(x_{i_1}, \ldots, x_{i_r})\in \mathrm{Ass}(DI(G)^s)$ for some $s\geq 1$, then can we deduce that $G_\mathfrak{p}$ is connected?  We remain it as an open question, see Question \ref{OPEN-QUESTION}.


 \section{Required terminologies}\label{prem}
 \subsection{Algebraic background}
In this section, recall some definitions and notions from commutative algebra. Let  $R$ be  a commutative Noetherian ring and $I$ be   an ideal of $R$. A prime ideal $\mathfrak{p}\subset  R$ is an {\it associated prime} of $I$ if there exists an element $v$ in $R$ such that $\mathfrak{p}=(I:_R v)$, where $(I:_R v)=\{r\in R \mid  rv\in I\}$. The  {\it set of associated primes} of $I$, denoted by  $\mathrm{Ass}_R(R/I)$, is the set of all prime ideals associated to  $I$. 
The minimal members of $\mathrm{Ass}_R(R/I)$ are called the {\it minimal} primes of $I$, and $\mathrm{Min}(I)$ denotes the set of minimal prime ideals of $I$. Moreover, the associated primes of $I$ which are not minimal are called the {\it embedded} primes of $I$. If $I$ is a square-free monomial ideal, then $\mathrm{Ass}_R(R/I)=\mathrm{Min}(I)$, for example see \cite[Corollary 1.3.6]{HH1}.   Let $I$  be an ideal  of $R$  and  $\mathfrak{p}_1, \ldots, \mathfrak{p}_r$  be   the minimal primes of $I$. When there is no confusion about the underlying ring, we will denote the set of associated primes of $I$ simply by $\mathrm{Ass}(R/I)$ or $\mathrm{Ass}(I)$.    
Given an integer $n \geq 1$, the {\it $n$-th symbolic power} of $I$  is defined to be the ideal 
$$I^{(n)} = \mathfrak{q}_1  \cap \cdots \cap  \mathfrak{q}_r,$$
where $\mathfrak{q}_i$  is the primary component of $I^n$  corresponding to $\mathfrak{p}_i$. 

The ideal $I \subset R$ is said to have the {\it persistence property} if $\mathrm{Ass}(R/I^k)\subseteq \mathrm{Ass}(R/I^{k+1})$ for all positive integers $k$. Brodmann in \cite{Br} showed that in a Noetherian ring $R$ the set of associated prime ideals $\mathrm{Ass}(R/I^s)$ for the powers of an ideal $I$ stabilizes for $s \gg 0$. In other words, there exists the  minimum integer $k>0$ such that $\mathrm{Ass}(R/I^s)=\mathrm{Ass}(R/I^{s+1})$ for all $s\geq k$. This stable set of associated prime ideals is denoted by $\mathrm{Ass}^{\infty}(I)$.

\begin{definition}{\em
An ideal $I$ is called {\it normally torsion-free} if  $\mathrm{Ass}(R/I^k)\subseteq \mathrm{Ass}(R/I)$, for all $k\geq 1$. If $I$ is a square-free monomial ideal, then $I$ is normally torsion-free if and only if $I^k=I^{(k)}$   for all $k \geq 1$  (cf.  \cite[Theorem 1.4.6]{HH1}). 
}
 \end{definition}


\begin{definition}{\em Let $R$ be a unitary commutative ring and $I$ an ideal in $R$. An element $f\in R$ is {\it integral} over $I$, if there exists an equation 
 $$f^k+c_1f^{k-1}+\cdots +c_{k-1}f+c_k=0 ~~\mathrm{with} ~~ c_i\in I^i.$$
 The set of elements $\overline{I}$ in $R$ which are integral over $I$ is the 
 {\it integral closure} of $I$. The ideal $I$ is {\it integrally closed}, if $I=\overline{I}$, and $I$ is {\it normal} if all powers of $I$ are integrally closed, refer to \cite{HH1} for more information. The notion of integrality for a monomial ideal $I$ can be described in a simpler way as following: a monomial $u \in  R=K[x_1, \ldots, x_n]$ is integral over $I\subset R$ if and only if there exists an integer $k$ such that  $u^k \in I^k$, see \cite[Theorem 1.4.2]{HH1}.
 }
\end{definition}


Let $R=K[x_1, \ldots, x_n]$ be a polynomial ring over a field $K$. An ideal $I\subset R$ is called a monomial ideal if it admits a generating set consisting of monomials. Furthermore, $I$ is said to be a square-free monomial ideal if it is generated by square-free monomials. Throughout the following text, the unique minimal generating set of a monomial ideal $I$ will be denoted by $\mathcal{G}(I)$. The {\em support} of a monomial $u$, denoted by $\mathrm{supp}(u)$, is the set of variables that divide $u$. Moreover, for a monomial ideal $I$, we set $\mathrm{supp}(I)=\bigcup_{u \in \mathcal{G}(I)}\mathrm{supp}(u)$. Given a square-free monomial ideal $I\subset R$,  the {\it Alexander dual}  of  $I$, denoted by $I^\vee$,  is given by 
 $$I^\vee= \bigcap_{u\in \mathcal{G}(I)} (x_i~:~ x_i \in \mathrm{supp}(u)).$$
 

\subsection{Graph theory}
Let $G$ be a finite simple undirected graph with the vertex set $V(G)$ and the edge set $E(G)$. The degree of a vertex $v \in V(G)$ is denoted by $\deg(v)$ and it represents the number of vertices adjacent to $v$. A {\em subgraph} of $G$ is a graph $H$ with $V(H) \subseteq V(G)$ and $E(G)\subseteq E(H)$. A subgraph $H$ of $G$ is called an {\em induced} subgraph if it contains all the edges of $G$ that have both vertices in $H$. 

A set $T\subseteq V(G)$ is called a {\em vertex cover} of $G$ if it intersects every edge of $G$ non-trivially. A vertex cover is called minimal if it does not properly contain any other vertex cover of $G$. For each vertex $v \in V(G)$, the {\em closed neighborhood} of $v$ in $G$ is defined as follows: 
\[
N_G[v]=\{u \in V(G) : \{u,v\} \in E(G)\} \cup \{v\}.
\]
When there is no confusion about the underlying graph, we will denote $N_G[v]$ simply by $N[v]$. A subset $S\subseteq V(G)$ is called a  {\em dominating set} of $G$ if $S \cap N[v] \neq \emptyset$   for all $v\in V(G)$. A dominating set is called {\it minimal} if it does not properly contain any other dominating set of $G$. A minimum dominating set of $G$ is a minimal dominating set with the smallest size.  The {\em dominating number} of $G$, denoted by $\gamma(G)$, is  the size of its minimum dominating set, that is,
\[
\gamma(G)=\mathrm{min} \{|S|: S \text{ is a minimal dominating set
of } G\}.
\]

The dominating sets and domination numbers of graphs are
 well-studied topics in graph theory.  Further   information on dominations in graphs can be found in \cite{HHS}. 
 

 In general,  square-free monomial ideals  can be associated with graphs in many different ways. We recall some commonly known definitions in this context. Let $V(G)=\{x_1,x_2,\ldots, x_n\}$ and $R=K[x_1, \ldots, x_n]$ be the polynomial ring in $n$ variable over a field $K$. The {\em edge ideal} of $G$, denoted by $I(G)$, is 
\[
I(G)=(x_ix_j : \{x_i,x_j\} \in E(G)).
\]
The {\em cover ideal} of $G$, denoted by $J(G)$, is 
\[
J(G)=(\prod_{x_i \in T}x_i: T \text{ is a minimal vertex cover of } G).
\]
In \cite{SM}, the {\em closed neighborhood ideal} of $G$ has been introduced as 
 \[
NI(G)= (\prod_{x_j \in N[x_i]} x_j:  x_i \in V(G)).
\]
Moreover, in \cite{SM}, the dominating ideal of $G$ is defined as
\[
DI(G)=(\prod_{x_i \in S}x_i: S \text{ is a minimal dominating set of } G).
\]
It is a well-recognized fact that $J(G)$ is the Alexander dual of $I(G)$ (cf.  \cite[Lemma 9.1.4]{HH1}). It is shown in \cite[Lemma 2.2]{SM} that $DI(G)$ is the Alexander dual of $NI(G)$. 


A {\em path} in $G=(V(G), E(G))$ is a sequence of distinct vertices $x_{i_1},\ldots, x_{i_{t}}$ such that $\{x_{i_j}, x_{i_{j+1}}\}\in E(G)$ for $j=1,\ldots,t-1$.  The {\em length}  of a path is the number of edges in it. We will refer to a path of length $t-1$ as a $t$-path. If a graph $G$ itself is a path on $n$ vertices, then it is denoted by $P_n$. 
A {\em cycle}, denoted by $C_n$, is a graph with $V(C_n)=\{x_{1},\ldots, x_{n}\}$ and 
\[
E(C_n)= \{\{x_{i}, x_{i+1}\}:  i=1, \ldots, n-1\}\cup\{\{x_1,x_n\}\}.
\] 

A graph $G$ is said to be {\em chordal} if it has no induced cycles of length greater than four. Equivalently, a graph $G$ is chordal if and only if it admits a perfect elimination ordering of its vertices. A {\em perfect elimination ordering} of $G$ is an ordering $v_1, \ldots, v_n$ of $V(G)$ with the property that for each $i,j$, and $l$, if $i<j$, $i<l$ with $v_l, v_j \in N_G[v_i]$, then $v_l \in N_G[v_j]$. Note that  chordal graphs are sometimes referred to as {\em triangulated graphs}.


\subsection{Hypergraph theory} Here, we recall some notions from hypergraph theory. A finite {\it hypergraph} $\mathcal{H}$ on a vertex set $V({\mathcal{H}})=\{x_1,x_2,\ldots,x_n\}$ is a collection of edges  $E({\mathcal{H}})=\{ E_1, \ldots, E_m\}$ with $E_i \subseteq V({\mathcal{H}})$, for all $i=1, \ldots,m$.  The {\em incidence matrix} of $\mathcal{H}$ is a $n \times m$ matrix $A=(a_{ij})$ such that $a_{ij}=1$ if $x_i \in E_j$, and $a_{ij}=0$ otherwise. 
A hypergraph $\mathcal{H}$ is called {\it simple}, if $E_i \subseteq  E_j$ implies  $i = j$. Simple hypergraphs are also known as {\em clutters}. Moreover, if $|E_i|=d$, for all $i=1, \ldots, m$, then $\mathcal{H}$ is called a {\em $d$-uniform} hypergraph. A $2$-uniform hypergraph $\mathcal{H}$ is just a finite simple graph.

Let $V(\H)=\{x_1,x_2,\ldots, x_n\}$ and $R=K[x_1, \ldots, x_n]$ be the polynomial ring in $n$ variable over a field $K$. The {\it edge ideal} of $\mathcal{H}$ is given by
$$I(\mathcal{H}) = (\prod_{x_j\in E_i} x_j : E_i\in  E({\mathcal{H}})).$$


A subset  $W \subseteq V({\mathcal{H}})$  is a {\it vertex cover} or {\em transversal} of $\mathcal{H}$ if $W \cap E_i\neq \emptyset$  for all $i=1, \ldots, m$. A transversal $W$ is {\it minimal} if no proper subset of $W$ is a transversal of  $\mathcal{H}$. The family of minimal transversals of $\mathcal{H}$ constitutes a simple hypergraph on $V({\mathcal{H}})$ called the {\em transversal hypergraph} of $\mathcal{H}$, and denoted by $T_r (\mathcal{H})$. The cover ideal of the hypergraph  $\mathcal{H}$, 
denoted by $J(\mathcal{H})$, is given by 
\[
J(\mathcal{H})=(\prod_{x_i \in W}x_i: W \text{ is a minimal transversal of } \mathcal{H}).
\]
In combinatorial commutative algebra, the transversals of a hypergraph are referred to as covers, and that is why the ideal $J(\mathcal{H})$ is usually called the cover ideal of $\mathcal{H}$. Moreover, similar to the case of edge ideal of graphs, the cover ideal $J(\mathcal{H})$ is  the Alexander dual of $I(\mathcal{H})$, that is,  $J(\mathcal{H})=I(\mathcal{H})^{\vee}$, consult  \cite[Theorem 6.3.39]{V1}.

Let $G$ be a graph with $V(G)=\{x_1, \ldots, x_n\}$. The {\em neighborhood hypergraph} of $G$, denoted by $\mathcal{N}(G)$, is a hypergraph on the vertex set $V(G)$ with edges $N[x_i]$, for all $i=1, \ldots, n$. Note that the edge ideal of $\N(G)$ is the closed neighborhood ideal $NI(G)$. A reader may find a confusing that we use the word ``closed" with ideal but not with hypergraphs. It is because we want to keep the standard notations given in \cite{B} and the naming of ideal given in \cite{SM}. Following \cite[Theorem 6.3.39]{V1}, we can rephrase \cite[Lemma 2.2]{SM} as follows:

\begin{Proposition} \label{transversal}
Let $G$ be a graph. Then $DI(G)$ is the edge ideal of the transversal hypergraph of $\mathcal{N}(G)$. 
\end{Proposition}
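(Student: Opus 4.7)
The proof is essentially a matter of unwinding definitions, so the plan is to set up the correspondence between dominating sets of $G$ and transversals of the neighborhood hypergraph $\mathcal{N}(G)$, verify that minimality is preserved, and then compare the generating sets of the two ideals.

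First I would recall that by the very construction of $\mathcal{N}(G)$, the edge set is
\[
E(\mathcal{N}(G)) = \{\,N[x_1],\,N[x_2],\,\ldots,\,N[x_n]\,\}.
\]
Hence a subset $S\subseteq V(G)$ is a transversal of $\mathcal{N}(G)$ if and only if $S\cap N[x_i]\neq\emptyset$ for every $i\in\{1,\ldots,n\}$, which is exactly the defining condition for $S$ to be a dominating set of $G$. Thus the collection of dominating sets of $G$ coincides with the collection of transversals of $\mathcal{N}(G)$.

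Next I would pass to minimality. Since both notions (minimal dominating set and minimal transversal) are defined as subsets that are minimal with respect to inclusion inside an identical family, the equivalence established above restricts to an equality
\[
\{\,S\subseteq V(G): S\text{ is a minimal dominating set of }G\,\} = \{\,S\subseteq V(G): S\text{ is a minimal transversal of }\mathcal{N}(G)\,\}.
\]
By definition of $T_r(\mathcal{N}(G))$, the right-hand side is precisely $E(T_r(\mathcal{N}(G)))$.

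Finally, I would compare the edge ideal of $T_r(\mathcal{N}(G))$ with $DI(G)$. The edge ideal is generated by the squarefree monomials $\prod_{x_j\in E}x_j$ for $E\in E(T_r(\mathcal{N}(G)))$, and $DI(G)$ is generated by $\prod_{x_j\in S}x_j$ for $S$ a minimal dominating set of $G$. By the previous step these two generating sets coincide, so
\[
DI(G) = I\bigl(T_r(\mathcal{N}(G))\bigr),
\]
which completes the argument. There is no real obstacle here: the statement is a translation of \cite[Lemma 2.2]{SM} into the language of transversal hypergraphs, and the only nontrivial input is the fact that the edge ideal of the transversal hypergraph of $\mathcal{H}$ equals the cover ideal $J(\mathcal{H})$, i.e.\ the Alexander dual of $I(\mathcal{H})$, which was recorded earlier in the excerpt.
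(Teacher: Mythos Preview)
Your proposal is correct. The paper does not give a written-out proof of this proposition at all: it simply states that the proposition is a rephrasing of \cite[Lemma 2.2]{SM} in light of \cite[Theorem 6.3.39]{V1}, i.e., it uses $DI(G)=NI(G)^{\vee}=I(\mathcal{N}(G))^{\vee}=J(\mathcal{N}(G))=I(T_r(\mathcal{N}(G)))$. Your direct argument---showing that dominating sets of $G$ are exactly the transversals of $\mathcal{N}(G)$, hence the minimal ones coincide---is precisely the content underlying those cited results, so your approach and the paper's are essentially the same, yours just being spelled out rather than delegated to references.
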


The {\em $t$-path hypergraph} of $G$, denoted by $\mathcal{H}_t(G)$, is a hypergraph on the vertex set $V(G)$ whose edges are all distinct $t$-paths in $G$. The edge ideal of $\mathcal{H}_t(G)$ is called the {\em $t$-path ideal} of $G$ and is denoted by $I_t(G)$.  When $t=2$, then $I_2(G)$ is simply the {\em edge ideal}  $I(G)$ of $G$. If there is no $t$-path in $G$, then we set $I_t(G)=0$. 


\subsection{Simplicial complexes} 

 A {\em simplicial complex} $\Delta$ on a set of vertices $V(\Delta)=\{x_1, \ldots, x_n\}$ is a collection of subsets of $V(\Delta)$, with the property that if $F \in \Delta$,  then all subsets of $F$ are also in $\Delta$, including the empty set. An element of $\Delta$ is called a {\em face} of $\Delta$, and 
  the maximal faces of $\Delta$ with respect to inclusion are called {\em facets} of $\Delta$. The \textit{facet ideal} of $\Delta$ is given by
\[
I(\Delta)= (\prod_{x_i \in F} x_{i}: F \text{ is a facet of } \Delta).   
\]
The set of all facets of $\Delta$ can be viewed as a simple hypergraph. In this case, the definition of the edge ideal of hypergraph consisting of all the facets of $\Delta$ is  the  same as the facet ideal of $\Delta$. 


\section{Neighborhood hypergraphs and their transversal hypergraphs}\label{section2}

The edge ideals and cover ideals of bipartite graphs are known to be normally torsion-free, see \cite{SVV} and \cite{GRV}. The bipartite graphs are characterized as the graphs that avoid any cycle of odd lengths. With the help of {\it Macaulay2} \cite{GS}, one can verify that $NI(C_8)$ and $DI(C_8)$ are not normally torsion-free. This shows that one cannot directly extend the results of \cite{SVV} and \cite{GRV} in the case of closed neighborhood ideals and dominating ideals of bipartite graphs. However, if we consider the neighborhood hypergraph $\N(G)$ of a graph $G$ such that there are no hypergraphic cycles in $\N(G)$, then one obtains a similar statement as in the case of edge ideals and cover ideals of bipartite graphs. To establish this, we recall some definitions and notions from hypergraph theory. For more details about these concepts, we refer the  reader to \cite{B}.

Let $\H$ be a simple hypergraph with $n$ vertices and $m$ edges, and let $M$ be the incidence matrix of $\H$.  A hypergraph $\mathcal{H}$ is called {\em Mengerian} if for all $\cb \in  {\mathbb{Z}^n_{+}}$, we have the following equality 
\[
\min\{\ab \; \cdot \;\cb : \; \ab \in {\mathbb{Z}^n_{+}},\; M \; \cdot \;\ab \geq \1b\}=\max\{\bb  \; \cdot \; \1b:\; \bb \in  {\mathbb{Z}^m_{+}},\;  M^T \; \cdot \; \bb \leq \cb\},
\]
where $\mathbb{Z}_{+}$ denotes the nonnegative integers. 
Algebraically, it is equivalent to $I(\mathcal{H})$ being normally torsion-free; it  can be found in  \cite[Corollary 10.3.15]{HH1} and  \cite[Theorem 14.3.6]{V1}. We next  define some of the well-known classes of Mengerian hypergraphs. 

\begin{Definition}\label{hypercycle}
{\em A cycle of length $k \geq 2$ in a hypergraph $\mathcal{H}$ is a sequence 
\[\mathcal{C}: x_1, E_1, x_2, E_2, \ldots, x_k, E_k, x_1\] such that

\begin{enumerate}
\item $E_1, \ldots, E_k$ are distinct edges of $\mathcal{H}$,
\item $x_1, \ldots, x_k$ are distinct vertices of $\mathcal{H}$,
\item $x_1,x_k \in E_k$ and $x_i,x_{i+1} \in E_i$ for all $i=1, \ldots, k-1$. 
\end{enumerate}

The vertices of $\mathcal{C}$ are $x_1, \ldots, x_k$ and the edges of $\mathcal{C}$ are $E_1, \ldots, E_k$. Following \cite[Section 10.3.4]{HH1}, in the following text, a cycle $\mathcal{C}$ of length $k \geq 3$ is said to be {\em special} if all the edges of $\C$ contain  exactly two vertices of $\mathcal{C}$.  Note that if we reduce to the case of graphs,  then the special cycles of length $k \geq 3$ are just the chordless cycles. 
}
\end{Definition}
A hypergraph $\mathcal{H}$ is called {\em balanced} if every cycle of $\mathcal{H}$ of odd length $k \geq 3$ has an edge containing at least three vertices of the cycle. Moreover, $\mathcal{H}$ is called {\em totally balanced} if every cycle of $\mathcal{H}$ of length $k \geq 3$ has an edge containing at least three vertices of the cycle, see \cite[Chapter 5]{B}.  In other words, $\mathcal{H}$ is balanced if it does not have special odd cycles and $\mathcal{H}$ is totally balanced if it does not have any special cycles. It  follows from the definition that every totally balanced hypergraph is also balanced.

\begin{Remark}\label{rem:imp}{\em
A well-known result of Fulkerson, Hoffman, and Oppenheim in \cite{FHO} states that balanced hypergraphs are Mengerian. In particular, totally balanced hypergraphs are also Mengerian. Furthermore, \cite[Theorem 19 on  page 207]{B} states that transversal hypergraphs of simple balanced hypergraphs are also Mengerian.
}
\end{Remark}

It is known that the neighborhood hypergraphs of trees are totally balanced, for example see \cite[Example 3 on page 174]{B}. 
We can now combine together  Remark~\ref{rem:imp} and Proposition~\ref{transversal} to obtain the following theorem. In particular, it is 
an affirmative answer to   \cite[Question 2.8(i)]{NQBM} and   \cite[Question 2.20(i)]{NQBM}.
\begin{Theorem}\label{NI-TREES}
The closed neighborhood ideals  and dominating ideals of trees are normally torsion-free.  
\end{Theorem}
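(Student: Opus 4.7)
The plan is to assemble the theorem directly from the ingredients already stated in the excerpt, so the proof should be short. Let $G$ be a tree and let $\N(G)$ denote its neighborhood hypergraph. The starting point is the fact cited just before the theorem, namely that the neighborhood hypergraph of a tree is totally balanced (Berge, \cite[Example 3 on page 174]{B}). Since every totally balanced hypergraph is balanced, $\N(G)$ is in particular balanced.

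For the first half of the theorem (the statement about $NI(G)$), I would invoke the Fulkerson--Hoffman--Oppenheim theorem recalled in Remark~\ref{rem:imp}: balanced hypergraphs are Mengerian. Therefore $\N(G)$ is Mengerian, and by the algebraic translation (\cite[Corollary 10.3.15]{HH1} / \cite[Theorem 14.3.6]{V1}) its edge ideal $I(\N(G))$ is normally torsion-free. But by definition this edge ideal is precisely $NI(G)$, which gives the first half.

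For the second half (the statement about $DI(G)$), I would use the other ingredient collected in Remark~\ref{rem:imp}, namely \cite[Theorem 19 on page 207]{B}: the transversal hypergraph of a simple balanced hypergraph is itself Mengerian. Since $\N(G)$ is simple and balanced, $T_r(\N(G))$ is Mengerian, hence its edge ideal $I(T_r(\N(G)))$ is normally torsion-free. By Proposition~\ref{transversal}, this edge ideal is exactly $DI(G)$, so the second half follows.

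There is essentially no hard step: the proof is a three-line chain combining Berge's combinatorial fact (trees give totally balanced neighborhood hypergraphs), the Fulkerson--Hoffman--Oppenheim / Berge Mengerian theorems, and the Mengerian-equals-normally-torsion-free dictionary, together with the identifications $NI(G)=I(\N(G))$ and $DI(G)=I(T_r(\N(G)))$. The only thing to double-check while writing is that $\N(G)$ is actually a simple hypergraph so that Berge's transversal theorem applies; this is automatic for a tree since no closed neighborhood $N[x_i]$ is contained in another (a tree has no triangles, so $N[x_j]\subseteq N[x_i]$ with $i\neq j$ would force $\deg(x_j)\le 1$ and the unique neighbor of $x_j$ to be $x_i$, in which case $x_j\in N[x_j]\setminus N[x_i]$ unless $x_j=x_i$). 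This minor verification is the only thing that needs explicit comment beyond citing the references.
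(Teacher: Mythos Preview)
Your overall approach is exactly the paper's: combine Berge's fact that the neighborhood hypergraph of a tree is totally balanced with Remark~\ref{rem:imp} (Fulkerson--Hoffman--Oppenheim plus Berge's transversal theorem) and Proposition~\ref{transversal}, then translate Mengerian into normally torsion-free.

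The one flaw is in your added verification that $\N(G)$ is simple. Your argument correctly shows that $N[x_j]\subseteq N[x_i]$ with $i\neq j$ forces $x_j$ to be a leaf whose unique neighbor is $x_i$; but then your claimed contradiction ``$x_j\in N[x_j]\setminus N[x_i]$'' is false, since $x_j$ is adjacent to $x_i$ and hence $x_j\in N[x_i]$. In fact $N[x_j]=\{x_i,x_j\}\subseteq N[x_i]$ genuinely holds, so $\N(G)$ is \emph{not} simple for any tree with at least two vertices. This does not actually harm the argument: one simply replaces $\N(G)$ by the clutter of its inclusion-minimal edges, which is still (totally) balanced and has the same minimal transversals, so Berge's theorem and Proposition~\ref{transversal} apply unchanged. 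The paper does not comment on this point either.
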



It is natural to ask that if there exists a complete characterization of graphs for which their associated neighborhood hypergraphs are balanced or totally balanced. Such graphs are characterized in \cite{Fa} and \cite{BDS}. To describe these graphs we need to recall some definitions from  \cite{Fa} and \cite{BDS}. 

\begin{Definition}{\em
Let $n \geq 3$. An {\em incomplete trampoline} is a chordal graph $G$ on $2n$ vertices whose vertex set can be partitioned into two sets, $W=\{y_1, \ldots, y_n\}$ and $U=\{u_1, \ldots, u_n\}$ such that no vertices of $W$ are adjacent to each other, and for each $i$ and $j$, $w_i$ is adjacent to $u_i$ if and only if $i=j$ or $i \equiv j+1 \;(\mathrm{mod\;}n)$. In addition, an incomplete trampoline is called {\em trampoline} if the induced subgraph on $U$ is a complete graph. If $n$ is odd, then (incomplete) trampoline is called an odd (incomplete) trampoline.}
\end{Definition}

Trampolines on $2n$ vertices are sometimes  also mentioned as {\em $n$-sun graphs}. It is shown in \cite[Theorem 2]{BDS} that for a given graph $G$, we have $\N(G)$ is balanced if and only if $G$ is chordal with no induced odd incomplete trampoline. Once again, we can deduce from Remark~\ref{rem:imp} and Proposition~\ref{transversal} the next theorem. 

\begin{Theorem}\label{trampoline}
Let $G$ be a chordal graph with no induced odd incomplete trampoline. Then the neighborhood ideal and the dominating ideal of $G$ is normally torsion-free.
\end{Theorem}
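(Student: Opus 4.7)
The plan is to derive Theorem~\ref{trampoline} exactly as the excerpt foreshadows: by chaining the Benzaken-Duchet-Shepperd characterization of chordal graphs with balanced neighborhood hypergraphs together with the Mengerian machinery already assembled. The argument is a direct parallel to the proof of Theorem~\ref{NI-TREES}, with the hypothesis ``$G$ is a tree'' replaced by the weaker hypothesis ``$G$ is chordal with no induced odd incomplete trampoline''; in fact, Theorem~\ref{NI-TREES} is the particular case where $G$ has no cycle at all, so the present theorem is a strict generalization proved in the same way.

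First I would invoke \cite[Theorem 2]{BDS} (recalled in the sentence just before the statement) to conclude that under the hypotheses on $G$, the neighborhood hypergraph $\mathcal{N}(G)$ is balanced. This is the one step where the combinatorial content of the theorem resides, and it is pulled directly from the literature. Next, I would apply both halves of Remark~\ref{rem:imp} simultaneously: by the Fulkerson-Hoffman-Oppenheim theorem, $\mathcal{N}(G)$ itself is Mengerian, and by Berge's result \cite[Theorem 19, p.~207]{B} the transversal hypergraph $T_r(\mathcal{N}(G))$ is Mengerian as well. Finally, by the algebraic translation recorded in the paragraph preceding Definition~\ref{hypercycle} (namely \cite[Corollary 10.3.15]{HH1} and \cite[Theorem 14.3.6]{V1}), a simple hypergraph is Mengerian if and only if its edge ideal is normally torsion-free. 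Since $I(\mathcal{N}(G)) = NI(G)$ by construction and $I(T_r(\mathcal{N}(G))) = DI(G)$ by Proposition~\ref{transversal}, both $NI(G)$ and $DI(G)$ are normally torsion-free, as required.

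The main ``obstacle'' is essentially semantic rather than substantive: since all three ingredients --- the chordal characterization of graphs with balanced $\mathcal{N}(G)$, the Mengerian property of balanced hypergraphs and their transversals, and the algebraic equivalence with normal torsion-freeness --- are already established and cited in the paper, the theorem is a direct synthesis. The only minor technical point worth double-checking is that we may treat $\mathcal{N}(G)$ as a simple hypergraph before applying Berge's transversal result; this is harmless because passing to the minimal clutter of $\{N[x_i] : x_i \in V(G)\}$ affects neither the edge ideal $NI(G)$ (whose minimal generators ignore non-minimal $N[x_i]$) nor the balancedness condition, which concerns special cycles built from distinct edges.
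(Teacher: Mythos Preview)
Your proposal is correct and follows essentially the same approach as the paper: the theorem is stated as a direct consequence of \cite[Theorem~2]{BDS} combined with Remark~\ref{rem:imp} and Proposition~\ref{transversal}, exactly as you outline. (One minor slip: the authors of \cite{BDS} are Brouwer, Duchet, and Schrijver, not ``Benzaken--Duchet--Shepperd.'')
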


\begin{Remark}\label{nonbalanced}
{\em It is crucial to mention that there are Mengerian neighborhood hypergraphs that are not balanced. To see this,  suppose 
 that  $C_6=(V(C_6), E(C_6))$ is  the even  cycle graph  of order $6$ on the vertex set $V(C_6)=\{x_1, \ldots, x_6\}$ and the   edge set 
$E(C_6)=\{\{x_i, x_{i+1}\}~:~i=1, \ldots, 5\}\cup \{\{x_6,x_1\}\}.$  We  can view  the $C_9$ in a similar way. 
The cycle  graphs $C_6$ and $C_9$ are not chordal, and hence  \cite[Theorem 2]{BDS} tells us  $\N(C_6)$ and $\N(C_9)$ are not balanced. Indeed, the edges of $\N(C_6)$ are
\[
N[x_1]=\{x_1, x_2,x_6\}, N[x_2]=\{x_1, x_2, x_3\}, N[x_3]=\{x_2, x_3, x_4\}, 
\]
\[N[x_4]=\{x_3, x_4, x_5\},N[x_5]=\{x_4, x_5, x_6\},N[x_6]=\{x_5, x_6, x_1\}, 
\]
 and the sequence $\mathcal{C}: x_1, N[x_2], x_3, N[x_4], x_5, N[x_6], x_1$ gives a special odd cycle in the hypergraph $\N(C_6)$. One can construct a special odd cycle  in $\N(C_9)$ in a similar fashion. In Section~\ref{DIofcycles}, we will show that not only $NI(C_6)$ and $NI(C_9)$ are normally torsion-free but also  their Alexander duals $DI(C_6)$ and $DI(C_9)$ are  normally torsion-free. This highlights that the converse of Theorem~\ref{trampoline} is not true.
}

\end{Remark}

Now, we turn our attention to  the characterization of graphs for which the associated neighborhood hypergraphs are totally balanced. 
To accomplish this, one has to recall the following definitions. 

A {\em strong elimination ordering} of $G$ is an ordering $v_1, \ldots, v_n$ of $V(G)$ with the property that for each $i,j,k,$ and $l$, if $i<j$, $k<l$ with $v_k, v_l \in N_G[v_i]$ and $v_k \in N_G[v_j]$, then $v_l \in N_G[v_j]$.  Graphs that admit a strong elimination ordering are called {\em strongly chordal graphs}. The strongly chordal graphs were introduced by Farber in \cite{Fa}. They form a subclass of chordal graphs, because a strong elimination ordering is also a perfect  elimination ordering, by letting $i=k$. Among many other characterizations of strongly chordal graphs, it is shown in \cite{Fa} that a graph $G$ is strongly chordal if and only if it is chordal and does not contain any induced trampoline.
   In \cite[Theorem 3.3]{Fa}, a polynomial procedure to recognize strongly chordal graphs and to construct strong elimination orderings is given. Some of the special subclasses of strongly chordal graphs are trees and interval graphs. It is proved in \cite[Theorem 5.2]{Fa} and re-proved in \cite[Theorem 2]{BDS} that a graph $G$ is strongly chordal if and only if $\N(G)$ is totally balanced. We provide a neat  and straightforward proof of the forward implication in the following

\begin{Theorem}\label{strongly chordal}
Let $G$ be a strongly chordal graph. Then $\mathcal{N}(G)$ is totally balanced. 
\end{Theorem}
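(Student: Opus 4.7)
The plan is to argue by contradiction using the strong elimination ordering. Fix such an ordering $v_1, \ldots, v_n$ of $V(G)$, and suppose for contradiction that $\mathcal{N}(G)$ admits a special cycle
$$\mathcal{C}\colon x_1, N[y_1], x_2, N[y_2], \ldots, x_k, N[y_k], x_1$$
of length $k \geq 3$. Since the edges $N[y_1], \ldots, N[y_k]$ are distinct, so are the vertices $y_1, \ldots, y_k$, and the special-cycle hypothesis tells us that each $N[y_i]$ meets $\{x_1, \ldots, x_k\}$ in exactly the two cycle vertices $x_i$ and $x_{i+1}$ (indices modulo $k$).

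The key idea is to use the strong elimination property to produce a ``shortcut'' that forces a third cycle vertex into some $N[y_j]$. After cyclically relabeling, I may assume $y_1$ has the smallest position in the ordering among $y_1, \ldots, y_k$, so that $y_1$ precedes both $y_2$ and $y_k$. I then split into two cases according to the relative order of $x_1$ and $x_2$. If $x_1$ precedes $x_2$, the strong elimination property applies with $x_1, x_2 \in N[y_1]$ and $x_1 \in N[y_k]$ (where $y_1$ precedes $y_k$ and $x_1$ precedes $x_2$), forcing $x_2 \in N[y_k]$; but then $N[y_k]$ contains three distinct cycle vertices $x_1, x_2, x_k$ (using $k \geq 3$ to ensure $x_2 \neq x_k$), contradicting the special-cycle assumption. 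The symmetric case where $x_2$ precedes $x_1$ is handled by the analogous application to $y_1, y_2, x_2, x_1$, which forces $x_1 \in N[y_2]$ and produces three cycle vertices $x_1, x_2, x_3$ in $N[y_2]$.

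The main obstacle I anticipate is the bookkeeping around degenerate configurations in which some $y_j$ coincides with some $x_i$, since the cycle data labels vertices and edges independently. Fortunately the strong elimination property refers only to positions in the ordering and to closed-neighborhood memberships, both of which carry over verbatim in such cases (a vertex lies in its own closed neighborhood, and every required incidence was already present in the original cycle), so the argument is robust. Beyond that, the proof is essentially a single use of the strong elimination property after the two choices (minimizing over the $y_j$'s and then comparing $x_1$ with $x_2$) have been made.
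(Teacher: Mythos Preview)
Your argument is correct and follows essentially the same route as the paper: pick the neighborhood $N[y_1]$ whose center $y_1$ is earliest in the strong elimination ordering, and apply the defining property once to push one endpoint of $N[y_1]$ into an adjacent edge of the cycle, producing a third cycle vertex there. The only cosmetic difference is that the paper absorbs your two cases into one by first reflecting the cycle so that $x_2$ precedes $x_1$, whereas you treat the orders $x_1 < x_2$ and $x_2 < x_1$ separately; your version is in fact a touch more explicit, and your remark about possible coincidences $y_j = x_i$ is a nice sanity check that the paper leaves implicit.
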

\begin{proof}
Let $G$ be a strongly chordal graph.  To simplify the notation in the following text, for any $p,q \in V(G)$ we write $p<q$ if $p$ appears before $q$ in the strong elimination ordering of $G$.
Let $$\mathcal{C}: x_1, E_1, x_2, E_2, \ldots, x_k, E_k, x_1,$$  be a cycle in $\mathcal{N}(G)$ with $k \geq 3$.  Let  $v_1, \ldots, v_k \in G$ be such that $E_i=N[v_i]$  for all $i=1, \ldots, k$. We can rearrange the edges in $\mathcal{C}$ such that $v_1<v_i$ for all $i=2, \ldots, k$, and $x_2<x_1$.  Since $x_1, x_2 \in E_1=N[v_1]$ and $x_2 \in E_2=N[v_2]$, it follows that $x_1 \in E_2$. This shows that 
$\mathcal{C}$ is not a special cycle, and so $\mathcal{N}(G)$ is totally balanced, as claimed.  
\end{proof}

Totally balanced hypergraphs have further nice algebraic implications. It follows from \cite[Theorem 2.3]{HHTZ} that a simple hypergraph $\H$ is a simplicial forest if and only if $\H$ is totally balanced. Simplicial forests were introduced by Faridi in \cite{Far1} in the language of simplicial complexes. The facet ideals of simplicial forests possess many nice properties as noted in \cite{Far1} and \cite{Far2}. For example, Faridi proved in \cite[Corollaries  5.5 and  5.6]{Far2} that if $\Delta$ is a simplicial forest,  then its facet ideal $I(\Delta)$ is sequentially Cohen-Macaulay and $I(\Delta)^{\vee}$ is componentwise linear. More details and related definitions can be found in \cite{Far1} and \cite{Far2}.

We summarize the  discussion above  together with Remark~\ref{rem:imp} and Proposition~\ref{transversal}, in the next theorem. 

\begin{Theorem}\label{stronglychordal}
Let $G$ be a strongly chordal graph. Then the following statements hold:   
\begin{itemize}
\item[(i)]  NI(G) is normally torsion-free. 
\item[(ii)]  NI(G) is sequentially Cohen-Macaulay.
\item[(iii)] DI(G) is normally torsion-free. 
\item[(iv)]  DI(G) is componentwise linear.
\end{itemize}
\end{Theorem}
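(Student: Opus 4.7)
The plan is to assemble the four statements from results already collected in the excerpt, using Theorem \ref{strongly chordal} as the central combinatorial input: if $G$ is strongly chordal, then $\N(G)$ is totally balanced. From there, each item is a translation between a hypergraph property and an algebraic property.

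For (i), I would note that $NI(G)$ is by definition the edge ideal $I(\N(G))$. Since $\N(G)$ is totally balanced, it is in particular balanced, so by the theorem of Fulkerson--Hoffman--Oppenheim recalled in Remark \ref{rem:imp} the hypergraph $\N(G)$ is Mengerian. Invoking the equivalence between the Mengerian property and normal torsion-freeness of the edge ideal (\cite[Corollary 10.3.15]{HH1} or \cite[Theorem 14.3.6]{V1}) yields (i). For (iii), Proposition \ref{transversal} identifies $DI(G)$ with the edge ideal of the transversal hypergraph $T_r(\N(G))$. The second half of Remark \ref{rem:imp} (i.e.\ \cite[Theorem 19, p.~207]{B}) says the transversal hypergraph of a simple balanced hypergraph is again Mengerian, so the same algebraic translation gives normal torsion-freeness of $DI(G)$.

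For (ii) and (iv), the idea is to route through simplicial forests. By \cite[Theorem 2.3]{HHTZ}, a simple hypergraph is a simplicial forest if and only if it is totally balanced; thus $\N(G)$, viewed as the facet set of a simplicial complex $\Delta$, is a simplicial forest. The facet ideal $I(\Delta)$ coincides with $NI(G)$, and its Alexander dual is $NI(G)^\vee = DI(G)$ by \cite[Lemma 2.2]{SM}. Applying Faridi's results \cite[Corollaries 5.5 and 5.6]{Far2}, $I(\Delta)$ is sequentially Cohen--Macaulay and $I(\Delta)^\vee$ is componentwise linear, which gives (ii) and (iv) respectively.

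There is essentially no technical obstacle here beyond making sure the identifications are the correct ones: $NI(G) = I(\N(G)) = I(\Delta)$ for the simplicial complex $\Delta$ whose facets are the closed neighborhoods, and $DI(G) = NI(G)^\vee = I(T_r(\N(G)))$. The only mild care needed is that Faridi's corollaries are formulated for simplicial forests in the sense of \cite{Far1}, so I would explicitly invoke \cite[Theorem 2.3]{HHTZ} to bridge the two languages before citing \cite[Corollaries 5.5 and 5.6]{Far2}. Once these identifications are in place, the four items follow as immediate corollaries of Theorem \ref{strongly chordal} combined with Remark \ref{rem:imp}, Proposition \ref{transversal}, \cite[Theorem 2.3]{HHTZ}, and \cite[Corollaries 5.5 and 5.6]{Far2}.
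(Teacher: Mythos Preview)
Your proposal is correct and follows essentially the same route as the paper: the paper presents Theorem~\ref{stronglychordal} as a summary of the preceding discussion, invoking exactly the same ingredients you list---Theorem~\ref{strongly chordal} for total balancedness, Remark~\ref{rem:imp} and Proposition~\ref{transversal} for (i) and (iii), and \cite[Theorem 2.3]{HHTZ} together with \cite[Corollaries 5.5 and 5.6]{Far2} for (ii) and (iv).
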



\section{The $t$-path hypergraphs of path graphs}\label{tpath}

Let $P_n$ be a path on $n$ vertices. It follows from \cite[Theorem 2.7]{JT} that $\H_t(P_n)$ is a simplicial forest and using \cite[Theorem 3.2]{HHTZ} it can be seen that $\H_t(P_n)$ is totally balanced. Consequently, $I_t(P_n)$ is normally torsion-free. Furthermore,  we conclude   from Remark~\ref{rem:imp} 
  that $I_t(P_n)^{\vee}$ is normally torsion-free. We provide another proof of these facts by using algebraic tools. To do this, we first  introduce a technique to construct new normally torsion-free square-free monomial ideals based on the existing ones. This technique will be implemented in the subsequent result. 

\begin{theorem} \label{NTF1}
Let $I$ be a normally torsion-free square-free monomial ideal in a polynomial ring  $R = K[x_1, \ldots, x_n]$ and $v$ be a square-free monomial in $R$. Let  $\mathcal{G}(I)=\{u_1, \ldots, u_m\}$ such that $\mathrm{gcd}(v, u_i)$ divides  $\mathrm{gcd}(v, u_{i+1})$  for all $i=1, \ldots, m-1$.   Then the following statements hold:
\begin{itemize}
\item[(i)] $I+vR$  is normally torsion-free. 
\item[(ii)] $I+vR$  is nearly normally torsion-free. 
\item[(iii)]  $I+vR$ is normal.
\item[(iv)]  $I+vR$ has the strong persistence property. 
\item[(v)]  $I+vR$ has the persistence property. 
\item[(vi)]  $I+vR$ has the symbolic strong persistence property.
\end{itemize}
\end{theorem}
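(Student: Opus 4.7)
The six conclusions form a cascade in which statement (i) implies each of (ii)--(vi) by standard implications valid for square-free monomial ideals: (ii) is a one-sided weakening of (i); (iii) holds because, for a normally torsion-free square-free monomial ideal, the ordinary powers agree with the symbolic powers and each symbolic power is already integrally closed; and the persistence-type conclusions (iv), (v), and (vi) are classical consequences of normality together with the equality between ordinary and symbolic powers. Hence the whole content of the theorem reduces to proving (i).

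For (i), the plan is to argue by induction on $m = |\mathcal{G}(I)|$, using the chain condition to drive the reduction. The base case $m = 1$ is handled by factoring out the common part: with $g = \gcd(u_1, v)$, $u_1 = g\cdot a$, and $v = g\cdot b$ where $\gcd(a,b) = 1$, one has $I + vR = g\cdot (a,b)$. The ideal $(a,b)$, generated by two square-free monomials of disjoint supports, is trivially normally torsion-free, and multiplication by the square-free monomial $g$ preserves normally torsion-freeness.

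For the inductive step $m \geq 2$, the chain condition says precisely that the traces $F_i := \supp(v) \cap \supp(u_i)$ form a nested chain $F_1 \subseteq \cdots \subseteq F_m$. I plan to exploit the maximality of $F_m$ by distinguishing two cases. If $\supp(v) \subseteq \supp(u_m)$, then $u_m$ is redundant in the minimal generating set of $I + vR$, so the number of generators strictly decreases and one can apply the induction hypothesis after verifying that the chain condition is inherited by $(u_1, \ldots, u_{m-1})$. Otherwise, one picks a variable $x \in \supp(v) \setminus \supp(u_m)$ and uses the nesting of the $F_i$'s either to perform a deletion/contraction at $x$ that reduces to a smaller configuration, or to apply the induction hypothesis to $I + (v/x)R$ with a separate treatment of the role of $x$.

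The main obstacle is that the intermediate ideals arising in this induction (for example, sub-ideals of the form $(u_1, \ldots, u_{m-1})$) need not themselves be normally torsion-free, since normally torsion-freeness of square-free monomial ideals is not preserved under arbitrary deletion of generators. The chain condition is precisely what allows the induction to go through, but translating it into an algebraic reduction at each step requires care. A perhaps more robust alternative is to work directly with associated primes: given $\mathfrak{p} = ((I+vR)^k : f)$ for some monomial $f$, expand $(I+vR)^k = \sum_{j=0}^{k} I^{k-j} v^j$ and use the nesting of the $F_i$'s together with the normally torsion-freeness of $I$ to force $\mathfrak{p} \in \Min(I+vR)$. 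The key combinatorial input is the chain structure on the $F_i$'s, and carrying out this analysis explicitly is the main technical hurdle.
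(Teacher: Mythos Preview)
Your reduction of (ii)--(vi) to (i) is fine and matches the paper's treatment. The problem is entirely in (i), and the gap you flag yourself is fatal to the induction as written. In the step where $\supp(v)\subseteq\supp(u_m)$ you want to pass to $I'+vR$ with $I'=(u_1,\ldots,u_{m-1})$; but the induction hypothesis needs $I'$ itself to be normally torsion-free, and deleting a minimal generator from a normally torsion-free square-free monomial ideal simply does not preserve that property. No amount of ``inheriting the chain condition'' helps, because the hypothesis on $I$ is where all the work is done, and it is lost. Your second branch (choosing $x\in\supp(v)\setminus\supp(u_m)$) does not decrease $m$, so it is unclear what quantity the induction is on. The fallback you propose---expanding $(I+vR)^k=\sum_j I^{k-j}v^j$ and arguing directly about associated primes---is not a plan but a restatement of the problem; nothing in what you wrote explains how the chain condition would let you control the colon ideals that arise.

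The paper takes a completely different route that avoids any induction on $m$. It invokes a criterion from \cite{SNQ} (their Theorem~3.7): a square-free monomial ideal $J$ is normally torsion-free provided there is a square-free monomial $v$ with $v\in\mathfrak{p}\setminus\mathfrak{p}^2$ for every $\mathfrak{p}\in\Min(J)$, and such that for each $x_t\in\supp(v)$ the deletion $J\setminus x_t$ is normally torsion-free with $\mathfrak{m}\setminus x_t\notin\Ass\bigl((J\setminus x_t)^s\bigr)$ for all $s$. Applied to $J=I+vR$, the chain condition is used \emph{only} to show that no minimal prime of $J$ can meet $\supp(v)$ in two or more variables (this is the two-case analysis in the paper). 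The second hypothesis then follows from the observation that $J\setminus x_t=I\setminus x_t$ for $x_t\in\supp(v)$, together with the fact (from \cite{SN}) that deleting a \emph{variable} from a normally torsion-free square-free monomial ideal preserves normal torsion-freeness. So the key maneuver is a variable-deletion, not a generator-deletion, and that is exactly what your induction is missing.
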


\begin{proof}
(i)  To simplify the notation, set  $J:=I+vR$.  If  $J\setminus x_t=\mathfrak{m}\setminus x_t$ for some $1\leq t \leq n$, then  we  can write  $J=x_tL +\mathfrak{m}\setminus x_t$. If $L=R$, then $J=\mathfrak{m}$, and the assertion holds. Assume that   $L\neq R$. Let  $h\in \mathcal{G}(L)$. If $x_\alpha 
 \mid h$ for some $\alpha \in \{1, \ldots, n\}\setminus \{t\}$, this gives that  $h\in  \mathfrak{m}\setminus x_t$, and hence 
 $L\subseteq \mathfrak{m}\setminus x_t$. We thus get  $J=\mathfrak{m}\setminus x_t$, and the proof is over. Thus, we assume that 
    $J\setminus x_t \neq \mathfrak{m}\setminus x_t$ for all  $t=1, \ldots,  n$.  In what follows,  we first show that $v\in \mathfrak{p}\setminus \mathfrak{p}^2$ for any $\mathfrak{p}\in \mathrm{Min}(J)$. Pick  an arbitrary element   $\mathfrak{p}\in \mathrm{Min}(J)$. Due to  $v\in J$ and $J\subseteq \mathfrak{p}$, we have  $v \in \mathfrak{p}$.
   On the contrary, assume that  $v\in \mathfrak{p}^2$. Because $v$ is  a  square-free monomial,  this yields that  
    $|\mathrm{supp}(\mathfrak{p}) \cap \mathrm{supp}(v)| \geq 2$.    If $\mathrm{gcd}(v, u_m)=1$, then  $\mathrm{gcd}(v, u_i)=1$ for all $i=1, \ldots, m-1$. In this case, the claim can be deduced directly from  Theorem   \cite[Theorem 2.5]{SN}. Let     $\mathrm{supp(gcd}(v,u_m))=\{x_{i_1}, \ldots, x_{i_k}\}$  with    $\{{i_1}, \ldots, {i_k}\}\subseteq \{1, \ldots, n\}.$  Here, one may consider the following cases: 
\vspace{1mm}

\textbf{Case 1.}  $x_{i_r} \in \mathfrak{p}$ for some $x_{i_r}\in \{x_{i_1}, \ldots, x_{i_k}\}$.  Since  $|\mathrm{supp}(\mathfrak{p}) \cap \mathrm{supp}(v)| \geq 2$, we can take   $x_\lambda\in  \mathrm{supp}(\mathfrak{p}) \cap \mathrm{supp}(v)$  with  $x_\lambda \neq x_{i_r}$. We claim  that  $x_\lambda \notin  \{x_{i_1}, \ldots, x_{i_k}\}$. Suppose, on the contrary, that  
$x_\lambda \in \{x_{i_1}, \ldots, x_{i_k}\}$.  Let    $$c:=\min\{j~:~x_{i_r}\mid u_j, \text{~where~} j=1, \ldots, m\},$$  and  
 $$d:=\min\{j~:~x_\lambda\mid u_j, \text{~where~} j=1, \ldots, m\}.$$  If   $c\leq d$ (respectively,  $c>d$), then the assumption yields that 
   $I \subseteq \mathfrak{p}\setminus \{x_\lambda\}$ (respectively, $I \subseteq \mathfrak{p}\setminus \{x_{i_r}\}$), and by virtue of 
$v \in \mathfrak{p}\setminus \{x_\lambda\} $ (respectively, $v \in \mathfrak{p}\setminus \{x_{i_r}\}$), this gives rise to  $J \subseteq \mathfrak{p}\setminus \{x_\lambda\}$  (respectively,  $J \subseteq \mathfrak{p}\setminus \{x_{i_r}\}$), which contradicts the minimality of 
 $\mathfrak{p}$.  Consequently, we conclude that $x_\lambda \notin  \{x_{i_1}, \ldots, x_{i_k}\}$, and thus   $x_\lambda \notin \mathrm{supp}(I)$. 
    In the light of  $v \in \mathfrak{p}\setminus \{x_\lambda\}$, this leads to   $J \subseteq \mathfrak{p}\setminus \{x_\lambda\}$, a contradiction to the minimality of $\mathfrak{p}$.
\vspace{1mm}

\textbf{Case 2.}    $x_{i_r} \notin  \mathfrak{p}$ for any  $x_{i_r}\in \{x_{i_1}, \ldots, x_{i_k}\}$.   Select two distinct variables  $x_\alpha, x_\beta \in \mathrm{supp}(\mathfrak{p}) \cap \mathrm{supp}(v)$.  Accordingly, we have $x_\alpha, x_\beta \notin \mathrm{supp}(I)$. We thus  get 
 $I \subseteq \mathfrak{p}\setminus \{x_\alpha\} $. On account of  $v \in \mathfrak{p}\setminus \{x_\alpha\} $,  one can  conclude that 
 $J \subseteq  \mathfrak{p}\setminus \{x_\alpha\} $. This contradicts the  minimality of $\mathfrak{p}$.

Therefore,  we deduce that $v\notin \mathfrak{p}^2$.  In addition, note that    $J\setminus x_t=I\setminus x_t$ for any $x_t \in \mathrm{supp}(v)$.  It follows also from  \cite[Theorem 3.21]{SN}  that   $I\setminus x_t$ is normally torsion-free  for any $x_t \in \mathrm{supp}(v)$, and so   $J\setminus x_t$ is normally torsion-free  for any $x_t \in \mathrm{supp}(v)$.        Fix $s\geq 1$ and  $x_t \in \mathrm{supp}(v)$.    Suppose, on the contrary, that 
$\mathfrak{m}\setminus x_t \in \mathrm{Ass}(R/(J\setminus x_t)^s)$. In view of 
$\mathrm{Ass}(R/(J\setminus x_t)^s)=\mathrm{Min}(J\setminus x_t)$, we derive that    $\mathfrak{m}\setminus x_t \in\mathrm{Min}(J\setminus x_t)$, and thus  $J\setminus x_t=\mathfrak{m}\setminus x_t,$ which is a contradiction. We therefore have  $\mathfrak{m}\setminus x_t \notin \mathrm{Ass}(R/(J\setminus x_t)^s)$ for all $s$ and $x_t \in \mathrm{supp}(v)$.   It follows now from \cite[Theorem 3.7]{SNQ} that $J$ is  normally torsion-free, and the proof  is complete.  \par   
 (ii)-(vi) can be shown similar to the proof of  \cite[Corollary 4.10]{NQKR}.   
\end{proof}


As an application of Theorem \ref{NTF1}, we give the following result. 

\begin{proposition} \label{Path.NTF}
The $t$-path ideals of path graphs are normally torsion-free, for all $t\geq 0$.
\end{proposition}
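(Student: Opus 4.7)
The plan is to induct on $n$ and use Theorem~\ref{NTF1} as the inductive engine, peeling off the right-most $t$-path from $P_n$ at each stage. The cases $t\le 1$ are trivial ($I_t(P_n)$ is either the unit ideal or the graded maximal ideal), so I assume $t\ge 2$ throughout. The base case then treats $n\le t$: when $n<t$ one has $I_t(P_n)=0$, and when $n=t$ one has $I_t(P_n)=(x_1 x_2\cdots x_t)$, a principal monomial ideal. Both are trivially normally torsion-free.

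For the inductive step, fix $n>t\ge 2$ and assume $I_t(P_{n-1})$ is normally torsion-free. Writing $V(P_n)=\{x_1,\ldots,x_n\}$ in the order inherited from the path, the unique $t$-path in $P_n$ that is not already a $t$-path in $P_{n-1}$ is $v:=x_{n-t+1}x_{n-t+2}\cdots x_n$, so
\[
I_t(P_n)=I_t(P_{n-1})+vR.
\]
To bring this presentation into the form required by Theorem~\ref{NTF1}, I would order the generators of $I_t(P_{n-1})$ in the natural left-to-right way, $u_i=x_i x_{i+1}\cdots x_{i+t-1}$ for $i=1,\ldots,n-t$, and verify the gcd-divisibility hypothesis. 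The point is that $\mathrm{supp}(u_i)\cap\mathrm{supp}(v)=\{x_i,\ldots,x_{i+t-1}\}\cap\{x_{n-t+1},\ldots,x_n\}$ is either empty (when $i<n-2t+2$) or an initial segment $\{x_{n-t+1},\ldots,x_{i+t-1}\}$ of $\mathrm{supp}(v)$ that lengthens by exactly one variable as $i$ increases. Consequently $\gcd(v,u_i)\mid\gcd(v,u_{i+1})$ for every $i$, Theorem~\ref{NTF1}(i) applies, and the induction closes.

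The only real obstacle, and it is a mild one, is the bookkeeping of picking the correct linear order on $\mathcal{G}(I_t(P_{n-1}))$ so that the gcd-chain is monotone; with any other order (say, starting from the right) one would have to transpose generators mid-argument. Once the natural left-to-right ordering is fixed, every step above is immediate, and no additional ingredient beyond Theorem~\ref{NTF1} is needed.
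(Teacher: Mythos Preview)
Your proof is correct and follows essentially the same route as the paper: induction on $n$, writing $I_t(P_n)=I_t(P_{n-1})+vR$ with $v=x_{n-t+1}\cdots x_n$, ordering the generators $u_i=x_i\cdots x_{i+t-1}$ left to right, and verifying the gcd-chain hypothesis of Theorem~\ref{NTF1}(i). The only difference is cosmetic---you spell out the degenerate cases $t\le 1$ and $n<t$ a bit more explicitly than the paper does.
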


\begin{proof}
Let $P_n$ be a path graph with the vertex set  $V(P_n)=\{x_1, \ldots, x_n\}$ and  the edge set  
$E(P_n)=\{\{x_i, x_{i+1}\}~:~ i=1, \ldots, n-1\}.$  Then the  $t$-path ideal of $P_n$ is given by 
$I_t(P_n)=(x_{i}x_{i+1}\cdots x_{i+t-1}~:~i=1, \ldots, n-t+1).$ 
The proof is  by induction on $n$. If $n=t$, then $I_t(P_n)=(x_1x_2 \cdots x_{t})$, and there is nothing to show. Let $n>t$ and that the claim has been proven  for any value less than $n$.   Now, put  $I:=(x_{i}x_{i+1}\cdots x_{i+t-1}~:~i=1, \ldots, n-t)$,  $v:=x_{n-t+1}x_{n-t+2}\cdots x_{n-1}x_n$, and  
$u_i:=x_{i}x_{i+1}\cdots x_{i+t-1}$ for all $i=1, \ldots, n-t$. It is easily seen that    $\mathrm{gcd}(v,u_i)=1$ for all $i=1, \ldots, n-2t+1$, and 
$\gcd(v, u_i)=x_{n-t+1}x_{n-t+2}\cdots x_{t+i-1}$ for all $i=n-2t+2, \ldots, n-t$.  In particular, this implies that   $\mathrm{gcd}(v, u_i)$ divides  $\mathrm{gcd}(v, u_{i+1})$  for all $i=1, \ldots, n-t-1$.   It  follows also from the induction hypothesis that $I$ is normally torsion-free.  Thanks to     $I_t(P_n)=I+vR$, where $R=K[x_1, \ldots, x_n]$, we deduce  immediately  the assertion from  Theorem \ref{NTF1}(i). 
 \end{proof}


In what follows, our aim is to  study the  normally torsion-freeness of the Alexander duals of $3$-path ideals  of path graphs. 
For this, we will need the following proposition.


\begin{proposition}\label{Intersection}
Suppose that  $I$ and  $J$  are   two  normally torsion-free square-free monomial ideals in a polynomial ring  $R=K[x_1, \ldots, x_n]$ over a field $K$ such that  $\mathrm{supp}(I) \cap \mathrm{supp}(J)=\emptyset$.  Then $I\cap J=IJ$ is normally torsion-free.  
\end{proposition}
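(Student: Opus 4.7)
The plan is to exploit the hypothesis $\mathrm{supp}(I) \cap \mathrm{supp}(J) = \emptyset$ twice: once to identify $I \cap J$ with $IJ$, and again at the level of each power $(IJ)^k$ in order to read off its primary decomposition directly from those of $I^k$ and $J^k$.

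First I would observe that for any $u \in \mathcal{G}(I)$ and $v \in \mathcal{G}(J)$, disjointness of supports gives $\gcd(u,v)=1$, hence $\mathrm{lcm}(u,v)=uv$. Since $I \cap J$ is generated by these lcm's while $IJ$ is generated by the products $uv$, we obtain $I \cap J = IJ$ at once. The same observation applied to generators of $I^k$ and $J^k$, whose supports still lie in $\mathrm{supp}(I)$ and $\mathrm{supp}(J)$ respectively, yields $(IJ)^k = I^k J^k = I^k \cap J^k$ for every $k \geq 1$.

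The main step is then the following. Since $I$ and $J$ are normally torsion-free square-free monomial ideals, we have $I^k = \bigcap_{P \in \mathrm{Min}(I)} P^k$ and $J^k = \bigcap_{Q \in \mathrm{Min}(J)} Q^k$, where each $P^k$ is $P$-primary and each $Q^k$ is $Q$-primary (all these primes being generated by variables). Substituting into $(IJ)^k = I^k \cap J^k$ produces a decomposition of $(IJ)^k$ into primary ideals whose radicals form precisely the set $\mathrm{Min}(I) \cup \mathrm{Min}(J)$. The disjointness of variable supports guarantees that these primes are pairwise distinct, so the decomposition is genuinely primary. Consequently
\[
\mathrm{Ass}(R/(IJ)^k) \subseteq \mathrm{Min}(I) \cup \mathrm{Min}(J) = \mathrm{Min}(IJ) = \mathrm{Ass}(R/IJ),
\]
which is exactly the normally torsion-free condition for $IJ$.

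There is no serious obstacle in this plan; the only point worth verifying carefully is that the ``mixed'' intersection obtained by combining the two primary decompositions is indeed primary, and this follows immediately from the observation that no $P \in \mathrm{Min}(I)$ can coincide with any $Q \in \mathrm{Min}(J)$ under the disjoint-support hypothesis.
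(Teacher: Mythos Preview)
Your proposal is correct and follows essentially the same approach as the paper. The paper phrases the argument via symbolic powers, showing $(I\cap J)^{(k)}=I^{(k)}\cap J^{(k)}=I^k\cap J^k=(I\cap J)^k$ and invoking the characterization that a square-free monomial ideal is normally torsion-free iff its ordinary and symbolic powers coincide; your version unpacks this by writing $I^k=\bigcap_{P\in\mathrm{Min}(I)}P^k$ and $J^k=\bigcap_{Q\in\mathrm{Min}(J)}Q^k$ directly and reading off the associated primes, which is the same computation in slightly different language.
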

\begin{proof}
Due to    $\mathrm{supp}(I) \cap \mathrm{supp}(J)=\emptyset$, we  deduce from \cite[Proposition 1.2.1]{HH1}  that  $IJ=I \cap J$. 
In particular,  we have 
$\mathrm{Min}(I\cap J)=\mathrm{Min}(I) \cup  \mathrm{Min}(J)$. On the other hand, we know from \cite[Theorem 1.4.6]{HH1} that  a square-free monomial ideal $L$ is normally torsion-free if and only if $L^k=L^{(k)}$ for all $k\geq 1$, where $L^{(k)}$ denotes the $k$-th symbolic power of $L$. 
Hence, it is enough for us to show that $(I\cap J)^k=(I\cap J)^{(k)}$ for all $k\geq 1$. To  achieve this, fix $k\geq 1$. 
Since $I$ and  $J$  are    normally torsion-free,  one can conclude that $I^k=I^{(k)}$ and $J^k=J^{(k)}$. Thus, we get the following equalities  
\begin{align*}
(I \cap J)^{(k)}&=\bigcap_{\mathfrak{p}\in \mathrm{Min}(I\cap J)}((I \cap J)^kR_\mathfrak{p}\cap R)\\
&=\bigcap_{\mathfrak{p}\in \mathrm{Min}(I)}(I^kR_\mathfrak{p}\cap R) \cap \bigcap_{\mathfrak{p}\in \mathrm{Min}(J)}(J^kR_\mathfrak{p}\cap R)\\
&=I^{(k)}\cap  J^{(k)} \\
& =I^k \cap J^k\\
&=(I\cap J)^k. 
\end{align*}
This shows that $I\cap J=IJ$ is normally torsion-free, as claimed. 
\end{proof}


To prove the next propostion, we will use  the following auxiliary result. 

\begin{lemma} (\cite[Lemma 2.17]{NQBM}) \label{NQBM}
Let $I$ be a normally torsion-free square-free  monomial ideal in a polynomial ring $R=K[x_{1},\ldots ,x_{n}]$ with $\mathcal{G}(I) \subset R$. Then the ideal $$L:=IS\cap (x_{n}, x_{n+1}, x_{n+2}, \ldots, x_m)\subset  S=R[x_{n+1}, x_{n+2}, \ldots, x_m],$$  is normally torsion-free.
\end{lemma}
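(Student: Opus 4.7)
The plan is to establish $L^k = L^{(k)}$ for all $k \geq 1$, which is equivalent to the normal torsion-freeness of a square-free monomial ideal (cf.\ \cite[Theorem 1.4.6]{HH1}). Throughout, set $\mathfrak{p} := (x_n, x_{n+1}, \ldots, x_m)$, let $\mathcal{G}(I) = \{u_1, \ldots, u_r\}$, and recall that $(IS)^{(k)} = (IS)^k$ because $I$ (hence $IS$) is normally torsion-free.

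First, I would compute $L^{(k)}$. Since a prime of $S$ contains $L = IS \cap \mathfrak{p}$ exactly when it contains $IS$ or $\mathfrak{p}$, we have $\mathrm{Min}(L) \subseteq \mathrm{Min}(IS) \cup \{\mathfrak{p}\}$. Any member of this union that is not minimal over $L$ must strictly contain some $\mathfrak{q}' \in \mathrm{Min}(L)$, so the corresponding $k$-th prime power is absorbed by $\mathfrak{q}'^k$ in the intersection. Using the standard formula $L^{(k)} = \bigcap_{\mathfrak{q} \in \mathrm{Min}(L)} \mathfrak{q}^k$ for square-free monomial ideals, this yields
\[
L^{(k)} \;=\; \bigcap_{\mathfrak{q} \in \mathrm{Min}(IS) \cup \{\mathfrak{p}\}} \mathfrak{q}^k \;=\; (IS)^{(k)} \cap \mathfrak{p}^k \;=\; (IS)^k \cap \mathfrak{p}^k.
\]

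Next, I would show $(IS)^k \cap \mathfrak{p}^k \subseteq L^k$ by a direct combinatorial argument. Let $f$ be a monomial in $(IS)^k \cap \mathfrak{p}^k$. Since $f \in (IS)^k$, write
\[
f = u_{i_1} u_{i_2} \cdots u_{i_k} \cdot h
\]
for (possibly repeated) generators $u_{i_l} \in \mathcal{G}(I)$ and some monomial $h \in S$. Each $u_{i_l} \in R$ is square-free, so $\sum_{j=n}^m \deg_{x_j}(u_{i_l}) = \deg_{x_n}(u_{i_l}) \in \{0, 1\}$. Setting $a := |\{l : x_n \mid u_{i_l}\}|$, the condition $f \in \mathfrak{p}^k$, equivalent to $\sum_{j=n}^m \deg_{x_j}(f) \geq k$, forces $\sum_{j=n}^m \deg_{x_j}(h) \geq k - a$. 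Hence we may choose variables $x_{j_1}, x_{j_2}, \ldots, x_{j_{k-a}}$ (with multiplicity), each $j_t \in \{n, \ldots, m\}$, such that $x_{j_1} \cdots x_{j_{k-a}}$ divides $h$. Pair these variables bijectively with the $k-a$ indices $l$ satisfying $x_n \nmid u_{i_l}$. Each $u_{i_l}$ with $x_n \mid u_{i_l}$ is itself a minimal generator of $L$, and each product $u_{i_l} \cdot x_{j_t}$ with $x_n \nmid u_{i_l}$ is also a minimal generator of $L$. The product of these $k$ chosen generators therefore lies in $L^k$, and multiplying it by $h' := h / (x_{j_1} \cdots x_{j_{k-a}}) \in S$ recovers $f$. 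Thus $f \in L^k$.

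Combining the two steps gives $L^k = L^{(k)}$ for every $k \geq 1$, proving that $L$ is normally torsion-free. The most delicate point is the first step: verifying that $L^{(k)} = (IS)^k \cap \mathfrak{p}^k$ in all configurations of $\mathrm{Min}(L)$. In particular, when $m = n$ or when $(x_n) \in \mathrm{Min}(IS)$, either $\mathfrak{p}$ or some $\mathfrak{q} \in \mathrm{Min}(IS)$ may drop out of $\mathrm{Min}(L)$, and one must check that the corresponding factor in the symbolic-power intersection is dominated by a smaller one already present. Once this reduction is in place, the explicit token-pairing in the second step yields the required inclusion and closes the argument.
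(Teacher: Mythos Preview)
The paper does not supply its own proof of this lemma; it is quoted verbatim from \cite[Lemma~2.17]{NQBM} and used as a black box in the proofs of Proposition~\ref{NTF-Alexander Dual-Path} and Lemma~\ref{DI(C6+C9)}. There is therefore no in-paper argument to compare against.

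Your proof is correct and self-contained. The reduction $L^{(k)}=(IS)^{(k)}\cap\mathfrak{p}^k=(IS)^k\cap\mathfrak{p}^k$ is sound, including the edge cases you flag (when $m=n$ or $(x_n)\in\mathrm{Min}(IS)$, the redundant prime power is absorbed exactly as you describe). The token-pairing in Step~2 is the right combinatorial mechanism: the key observation that each $u_{i_l}\in\mathcal{G}(I)$ contributes at most one to the $\mathfrak{p}$-degree (namely the $x_n$-degree), so the deficit $k-a$ can be made up from~$h$, is precisely what makes the argument work.

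One harmless inaccuracy: you assert that each $u_{i_l}x_{j_t}$ with $x_n\nmid u_{i_l}$ is a \emph{minimal} generator of $L$. This can fail when $j_t=n$ and some $u_i\in\mathcal{G}(I)$ with $x_n\mid u_i$ satisfies $u_i\mid u_{i_l}x_n$ (e.g.\ $I=(x_1x_n,\,x_1x_2)$, $u_{i_l}=x_1x_2$). But minimality is irrelevant here; you only need $u_{i_l}\in L$ and $u_{i_l}x_{j_t}\in L$, both of which hold, so the conclusion $f\in L^k$ stands. Replacing ``minimal generator'' by ``element'' fixes the wording without touching the logic.
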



\begin{proposition} \label{NTF-Alexander Dual-Path}
The Alexander duals of $3$-path ideals of path graphs are normally torsion-free square-free monomial ideals.
\end{proposition}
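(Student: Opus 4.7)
The plan is to prove the proposition by induction on $n$, starting from the recursive identity
\[
I_3(P_n)^\vee \;=\; I_3(P_{n-1})^\vee \cdot S \,\cap\, (x_{n-2}, x_{n-1}, x_n), \qquad S = K[x_1, \ldots, x_n],
\]
which is obtained by peeling off the last intersection factor from the defining expression $I_3(P_n)^\vee = \bigcap_{i=1}^{n-2}(x_i, x_{i+1}, x_{i+2})$. The base cases are immediate: $I_3(P_3)^\vee = (x_1, x_2, x_3)$ is the irrelevant maximal ideal, and $I_3(P_4)^\vee = (x_2, x_3, x_1 x_4)$ can be written as $(x_2, x_3) + x_1 x_4 R$ and is normally torsion-free by a direct application of Theorem \ref{NTF1} (with $I = (x_2, x_3)$ and $v = x_1 x_4$, since the $\gcd$ divisibility condition is trivial).

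For the inductive step with $n \geq 5$, I would distribute the monomial intersection over the sum $(x_{n-2}) + (x_{n-1}, x_n) = (x_{n-2}, x_{n-1}, x_n)$ to obtain
\[
I_3(P_n)^\vee \;=\; x_{n-2}\bigl(I_3(P_{n-1})^\vee : x_{n-2}\bigr) \;+\; \bigl[I_3(P_{n-1})^\vee \cdot S \cap (x_{n-1}, x_n)\bigr].
\]
A direct colon computation gives $I_3(P_{n-1})^\vee : x_{n-2} = I_3(P_{n-3})^\vee$, since exactly the two factors $(x_{n-4}, x_{n-3}, x_{n-2})$ and $(x_{n-3}, x_{n-2}, x_{n-1})$ of $I_3(P_{n-1})^\vee$ contain $x_{n-2}$ and become the whole ring. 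By the inductive hypothesis both $I_3(P_{n-3})^\vee$ and $I_3(P_{n-1})^\vee$ are normally torsion-free. Proposition \ref{Intersection}, applied to the disjoint-support ideals $(x_{n-2})$ and $I_3(P_{n-3})^\vee$, gives that the first summand $x_{n-2}\,I_3(P_{n-3})^\vee$ is normally torsion-free. Lemma \ref{NQBM}, applied to $I_3(P_{n-1})^\vee \subset K[x_1, \ldots, x_{n-1}]$ with $x_{n-1}$ in the role of the last variable and $x_n$ as the new variable, gives that the second summand is also normally torsion-free.

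The final step is to deduce from the two summands being normally torsion-free that their sum $I_3(P_n)^\vee$ is normally torsion-free, which I would carry out via Theorem \ref{NTF1}: add the minimal generators $x_{n-2}m$ (where $m$ ranges over $\mathcal{G}(I_3(P_{n-3})^\vee)$) to the second summand one at a time, verifying at each stage that the current generators can be ordered so that their $\gcd$'s with the newly added monomial form a chain under divisibility. This gcd-chain verification is the main obstacle: small checks (already at $n = 6$) show that naive orderings can fail because $\gcd$'s like $x_{n-2}$ and $x_{n-1}$ become incomparable, so the ordering must exploit the structural fact that every generator of the second summand carries $x_{n-1}$ or $x_n$ while every newly added monomial carries $x_{n-2}$, and I expect a two-level ordering (first by which of $x_{n-1}$, $x_n$ appears, then a lexicographic refinement on the remaining ``cover'' factor drawn from smaller $I_3(P_m)^\vee$'s) to work. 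A cleaner alternative would be to first sharpen Lemma \ref{NQBM} to allow intersecting an n.t.f.\ ideal with a monomial ideal of the form $(x_{i_1}, \ldots, x_{i_k}, x_{n+1})$ sharing several variables with $R$, which would subsume the present case in a single application.
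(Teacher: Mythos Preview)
Your inductive decomposition and the identification of the two summands are correct, and each summand is indeed normally torsion-free by the tools you cite. The gap is in the last step: the iterated use of Theorem~\ref{NTF1} cannot be made to work, and no ordering of the generators will rescue it. The hypothesis of Theorem~\ref{NTF1} requires that the multiset $\{\gcd(v,u_i)\}_i$ be totally ordered by divisibility; once two of these gcds are incomparable, \emph{no} reordering of the $u_i$ produces a chain. This already happens at $n=6$. With the second summand equal to $(x_2x_5,\,x_3x_5,\,x_3x_6,\,x_1x_4x_5,\,x_1x_4x_6,\,x_2x_4x_6)$ and the first summand $x_4(x_1,x_2,x_3)$, adding $v=x_1x_4$ first succeeds, but then the surviving generators are $x_1x_4,\,x_2x_5,\,x_3x_5,\,x_3x_6,\,x_2x_4x_6$, and for the next candidate $v=x_2x_4$ one gets $\gcd(v,x_1x_4)=x_4$ and $\gcd(v,x_2x_5)=x_2$, which are incomparable; the same incomparability arises for $v=x_3x_4$ (gcds $x_4$ and $x_3$). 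Starting with $x_2x_4$ or $x_3x_4$ instead fails at the very first step for the same reason. So the ``two-level ordering'' you hope for does not exist, and the proposed sharpening of Lemma~\ref{NQBM} is a separate, unproved statement.

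The paper avoids this obstruction by working with a different induction scheme: instead of peeling off one prime and trying to reassemble via Theorem~\ref{NTF1}, it invokes \cite[Theorem 3.7]{SNQ}. One chooses the monomial $v=\prod_{k\ge 0}x_{1+3k}$, which meets every minimal prime $(x_i,x_{i+1},x_{i+2})$ in exactly one variable (so $v\in\mathfrak{p}\setminus\mathfrak{p}^2$), and then checks that each deletion $I_3(P_n)^\vee\setminus x_i$ is normally torsion-free. That deletion splits into an intersection of two strictly smaller Alexander duals (handled by induction and Proposition~\ref{Intersection}) together with a few height-$2$ primes sharing at most one variable with what came before (handled by repeated use of Lemma~\ref{NQBM}). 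This route never needs a gcd-chain condition on a sum of ideals.
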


\begin{proof}
Let $P_n$ be a path graph with the vertex set $V(P_n)=\{x_1, \ldots, x_n\}$ and the edge  set $E(P_n)=\{\{x_i, x_{i+1}\}~:~ i=1, \ldots, n-1\}$.
 Then  the Alexander dual of the  $3$-path ideal of the path graph $P_n$, denoted by $I_3(P_n)^{\vee}$,   is given by 
$I_3(P_n)^{\vee}=\bigcap_{i=1}^{n-2}(x_i, x_{i+1}, x_{i+2}).$ 
We show the claim by induction on $n$. Since every prime monomial ideal is normally torsion-free, this implies that the assertion is true for the case in which $n=3$. Let $n>3$ and that  the claim has been shown for any value less than $n$. Our strategy is to use  \cite[Theorem 3.7]{SNQ}. For this purpose, set 
 $\mathfrak{p}_i:=(x_i, x_{i+1},  x_{i+2})$ for all $i=1, \ldots, n-2$,  and $$v:=\prod_{k=0}^{[\frac{n-1}{3}]}x_{1+3k}.$$ 
It  is easy  to check   that $|\mathrm{supp}(v) \cap \mathrm{supp}(\mathfrak{p}_i)|=1$ for all $i=1, \ldots, n-2$. This yields that $v\in \mathfrak{p}_i \setminus \mathfrak{p}_i^2$ for all $i=1, \ldots, n-2$. To complete the argument, one has to verify that 
$\mathfrak{m}\setminus x_i \notin \mathrm{Ass}(R/(I_3(P_n)^{\vee}\setminus x_i)^s)$ for all $s$ and $x_i \in \mathrm{supp}(v)$, where $\mathfrak{m}=(x_1, \ldots, x_n)$. Fix $s\geq 1$ and $x_i \in \mathrm{supp}(v)$.  One can  check  that $I_3(P_n)^{\vee}\setminus x_i$, after deleting the redundant components,  can be viewed  as $J_1 \cap J_2 \cap \Gamma$, where $J_1=\bigcap_{r=1}^{\ell_1}\mathfrak{q}_r$ (respectively,  $J_2=\bigcap_{t=1}^{{\ell_2}}\mathfrak{q}'_t$) is  the Alexander dual  of a $3$-path ideal of a  path graph such as $P_1$ (respectively, $P_2$)  with $V(P_1)<n$ (respectively, $V(P_2)<n$), $V(P_1) \cap V(P_2)=\emptyset$ and $\Gamma=\bigcap_{c=1}^{\lambda}(x_{\alpha_c}, x_{\beta_c})$ such that  for any $1\leq r \leq \ell_1$,    $1\leq t \leq \ell_2$,  and 
$1\leq c\neq d \leq \lambda$, we have $|\mathrm{supp}(\mathfrak{q}_r) \cap  \mathrm{supp}(x_{\alpha_c}, x_{\beta_c})|\leq 1$, 
$|\mathrm{supp}(\mathfrak{q}'_t) \cap  \mathrm{supp}(x_{\alpha_c}, x_{\beta_c})|\leq 1$, 
 and 
$|\mathrm{supp}(x_{\alpha_c}, x_{\beta_c})  \cap \mathrm{supp}(x_{\alpha_d}, x_{\beta_d})|= 1$.  Furthermore, the inductive hypothesis gives  that $J_1$ and $J_2$ are  normally torsion-free, and Proposition \ref{Intersection} implies that $J_1\cap J_2$ is normally torsion-free. It follows now by  repeated use of  Lemma \ref{NQBM} that $J_1 \cap J_2 \cap \Gamma$  is normally torsion-free, and so  $I_3(P_n)^{\vee}\setminus x_i$ is normally torsion-free. This means that $\mathfrak{m}\setminus x_i \notin \mathrm{Ass}(R/(I_3(P_n)^{\vee}\setminus x_i)^s)$, as required. We conclude from \cite[Theorem 3.7]{SNQ} that $I_3(P_n)^{\vee}$ is normally torsion-free. 
This completes the  inductive step, and  therefore  the claim has been shown by induction.  
\end{proof}


\section{Stable sets of associated primes of dominating ideals of cycles }\label{DIofcycles}

In this section, we will mainly discuss the normally torsion-freeness  of $DI(C_n)$ and the stable sets of associated primes of $DI(C_n)$. In \cite[Theorem 3.9]{AB},  Alilooee  and  Banerjee  showed that the $3$-path ideal of a graph $G$ is normally torsion-free if and only if $G$  is  a path graph $P_k$ for some $k\geq 3$ or $G$ is a cycle $C_{3k}$ when $k=1, 2, 3$. Observe that the $3$-path ideal of a cycle $C_n$ with $n \geq 3$ coincides with $NI(C_n)$. With this observation, we can rephrase \cite[Theorem 3.9]{AB} in the following way.

\begin{proposition}\label{NI-CYCLE}
Let $C_n$ be a cycle graph with $n\geq 3$. Then  $NI(C_{n})$ is normally torsion-free if and only if $n\in \{3,6,9\}$. 
  \end{proposition}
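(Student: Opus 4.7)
The plan is to reduce the statement to the characterization result \cite[Theorem 3.9]{AB} by exhibiting an equality of monomial ideals $NI(C_n) = I_3(C_n)$. This is the only real content beyond invoking the cited theorem, since the proposition is explicitly advertised in the preceding paragraph as a rephrasing of \cite[Theorem 3.9]{AB} in the language of closed neighborhood ideals.

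The first step is to unwind the two generating sets. Label the vertices of $C_n$ by $x_1, \ldots, x_n$ with indices read modulo $n$, so that $E(C_n) = \{\{x_i, x_{i+1}\} : i = 1, \ldots, n\}$. Then $N_{C_n}[x_i] = \{x_{i-1}, x_i, x_{i+1}\}$ for every $i$, hence
\[
NI(C_n) = \bigl( x_{i-1} x_i x_{i+1} : i = 1, \ldots, n \bigr).
\]
On the other hand, every $3$-path in $C_n$ is a sequence of three distinct consecutive vertices $x_i, x_{i+1}, x_{i+2}$ (with indices mod $n$), because $C_n$ has no chords and so the only way to build a path of length two is along the cycle. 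Therefore
\[
I_3(C_n) = \bigl( x_i x_{i+1} x_{i+2} : i = 1, \ldots, n \bigr).
\]
Reindexing $i \mapsto i+1$ in the first presentation shows that the two generating sets coincide, and thus $NI(C_n) = I_3(C_n)$. In the degenerate case $n=3$ one should note separately that both ideals collapse to the single generator $(x_1 x_2 x_3)$, since all vertices are mutually adjacent and any ordering of $\{x_1,x_2,x_3\}$ is a $3$-path; this does not affect the conclusion.

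Once the identification $NI(C_n) = I_3(C_n)$ is in hand, the proposition is immediate: by \cite[Theorem 3.9]{AB}, the $3$-path ideal $I_3(G)$ of a graph $G$ is normally torsion-free if and only if $G$ is a path $P_k$ with $k \geq 3$ or $G = C_{3k}$ with $k \in \{1,2,3\}$. Specializing this to $G = C_n$ with $n \geq 3$ eliminates the path case and leaves precisely $n \in \{3, 6, 9\}$.

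The main obstacle, as such, is purely bookkeeping: verifying carefully that the $3$-paths of $C_n$ are exactly the consecutive triples and that the wrap-around indices align with the closed neighborhoods. Beyond this elementary observation, no further algebraic argument is required, because the hard combinatorial/algebraic work is absorbed into the cited theorem \cite[Theorem 3.9]{AB}.
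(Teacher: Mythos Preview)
Your proposal is correct and matches the paper's approach exactly: the paper does not give a standalone proof of this proposition but simply notes in the preceding paragraph that the $3$-path ideal of $C_n$ coincides with $NI(C_n)$ and then states the proposition as a rephrasing of \cite[Theorem 3.9]{AB}. Your write-up supplies the explicit bookkeeping behind that identification, which the paper leaves implicit.
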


As the first main result of this section, in Theorem~\ref{symbolic-cycles}, we will prove that the  statement above also holds for $DI(C_n)$. To  show   this, we first consider the case of $DI(C_6)$ and $DI(C_9)$ and verify  that they are normally torsion-free. 


\begin{lemma}\label{DI(C6+C9)}
The dominating ideals of $C_6$ and $C_9$ are normally torsion-free. 
\end{lemma}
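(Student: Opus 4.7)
The plan is to settle the two small cases $n=6$ and $n=9$ by explicit identification of the generators of $DI(C_n)$ combined with the deletion/localization criterion from \cite[Theorem~3.7]{SNQ} that already underlies the proof of Theorem~\ref{NTF1}(i). This keeps the argument in the same technical framework used elsewhere in the paper.

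Step~1 is combinatorial: list the minimal dominating sets of $C_6$ and $C_9$. For $C_6$, using the dihedral symmetry, the minimal dominating sets are exactly the three antipodal pairs $\{x_i,x_{i+3}\}$ and the two alternating triples $\{x_1,x_3,x_5\}$, $\{x_2,x_4,x_6\}$, so
\[
DI(C_6) \;=\; (x_1x_4,\; x_2x_5,\; x_3x_6,\; x_1x_3x_5,\; x_2x_4x_6).
\]
A longer but entirely analogous enumeration (both minimum-size dominating sets of type $\{x_1,x_4,x_7\}$ and the larger minimal ones) produces the explicit generating set of $DI(C_9)$.

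Step~2 is the algebraic reduction. By \cite[Theorem~3.7]{SNQ} it suffices to show, for each variable $x_t \in \mathrm{supp}(DI(C_n))$, that (a) the deletion $DI(C_n)\setminus x_t$ (the monomial ideal obtained by setting $x_t=1$) is normally torsion-free, and (b) $\mathfrak{m}\setminus x_t \notin \mathrm{Ass}(R/(DI(C_n)\setminus x_t)^s)$ for every $s\ge 1$. The rotational symmetry of $C_n$ collapses this to a single representative variable. After the substitution $x_t=1$, several of the generators become divisible by $1$ and are absorbed, and the resulting simplified ideal splits into pieces that fall under Proposition~\ref{Intersection}, Lemma~\ref{NQBM}, and the path-ideal result Proposition~\ref{Path.NTF} (since removing a vertex of $C_n$ turns the cycle into a path). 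Part~(b) is immediate, because comparing the surviving generators to $\mathfrak{m}\setminus x_t$ shows strict containment, and for squarefree monomial ideals $\mathfrak{m}\setminus x_t$ is associated to any power only if the ideal itself equals $\mathfrak{m}\setminus x_t$.

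The main obstacle is the $n=9$ case: the generating set of $DI(C_9)$ is substantially larger than that of $DI(C_6)$, so the deletion $DI(C_9)\setminus x_t$ requires a more delicate bookkeeping before the intersection/extension lemmas can be applied. If this structural approach becomes unwieldy, both cases are small enough that the claim can instead be closed by a direct \emph{Macaulay2} check that $DI(C_n)^k = DI(C_n)^{(k)}$ up to the Brodmann stability bound, mirroring the computational remark used for $C_8$ in the opening of Section~\ref{section2}.
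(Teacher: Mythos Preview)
Your outline names the right tool, \cite[Theorem~3.7]{SNQ}, but misapplies it in two essential ways. First, the operation $I\setminus x_t$ in that theorem is \emph{not} the monomial localization obtained by setting $x_t=1$; it is the ideal generated by those elements of $\mathcal G(I)$ that are \emph{not} divisible by $x_t$ (equivalently, erase $x_t$ from each prime in the primary decomposition). For $DI(C_6)=(x_3x_6,x_2x_5,x_1x_4,x_2x_4x_6,x_1x_3x_5)$ one gets $DI(C_6)\setminus x_3=(x_2x_5,x_1x_4,x_2x_4x_6)$, whereas the substitution $x_3\mapsto 1$ gives $(x_6,x_2x_5,x_1x_4,x_1x_5)$, a different ideal. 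Your heuristic ``removing a vertex turns $C_n$ into a path'' describes the localization of $NI(C_n)$, not the deletion of $DI(C_n)$, so the appeal to Proposition~\ref{Path.NTF} is misplaced. Second, \cite[Theorem~3.7]{SNQ} does not ask you to run over all variables: its hypothesis is the existence of a single square-free monomial $v$ with $v\in\mathfrak p\setminus\mathfrak p^{2}$ for every $\mathfrak p\in\mathrm{Min}(I)$, and then the deletion condition is checked only for $x_t\in\mathrm{supp}(v)$. This hypothesis cannot be dropped; in particular your justification of~(b), that for square-free monomial ideals $\mathfrak m\setminus x_t$ can be associated to a power only when the ideal equals $\mathfrak m\setminus x_t$, is false in general and only becomes true \emph{after} one knows $I\setminus x_t$ is normally torsion-free.

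The paper chooses $v=x_3x_6$ for $C_6$ and $v=x_3x_6x_9$ for $C_9$ (minimum dominating sets, so each closed neighbourhood meets $\mathrm{supp}(v)$ exactly once, giving $v\in\mathfrak p\setminus\mathfrak p^{2}$), then computes each deletion explicitly as an intersection of monomial primes and shows it is normally torsion-free via Theorem~\ref{NTF1}, Proposition~\ref{Intersection}, Lemma~\ref{NQBM}, and Proposition~\ref{NTF-Alexander Dual-Path}. For $C_9$ a nested application of \cite[Theorem~3.7]{SNQ} with the auxiliary monomial $x_1x_4x_7$ is needed inside each first-level deletion. Finally, the \textit{Macaulay2} fallback is not a proof: Brodmann's theorem provides no effective bound on the index of stability, so verifying $DI(C_n)^k=DI(C_n)^{(k)}$ for finitely many $k$ does not close the argument.
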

\begin{proof}
(i)  Suppose that  $E(C_6)=\{\{x_i, x_{i+1}\} : i=1, \ldots, 5\}\cup \{\{x_6,x_1\}\}.$ Then the dominating ideal of $C_6$ is given by 
\begin{align*}
DI(C_6)=& (x_1,x_2,x_3) \cap (x_2,x_3,x_4) \cap (x_3,x_4,x_5) \cap (x_4,x_5,x_6) \cap (x_5,x_6,x_1) \\ 
& \cap (x_6,x_1,x_2)\\
=&(x_3x_6,x_2x_5,x_1x_4,x_2x_4x_6,x_1x_3x_5).
\end{align*}
We want to demonstrate that   $DI(C_6)$ is normally torsion-free by using  \cite[Theorem 3.7]{SNQ}.  For this purpose, set $v:=x_3x_6$ and $L:=DI(C_6)$. 
One can easily see that $v\in \mathfrak{p}\setminus \mathfrak{p}^2$ for any $\mathfrak{p}\in \mathrm{Min}(L)$. Here, we must show 
that  $\mathfrak{m}\setminus x_i \notin \mathrm{Ass}(R/(L\setminus x_i)^s)$ for all $s$ and $x_i \in \{x_3, x_6\}.$ Fix $s\geq 1$. First, we establish  $\mathfrak{m}\setminus x_3 \notin \mathrm{Ass}(R/(L\setminus x_3)^s)$, where $\mathfrak{m}=(x_1, \ldots, x_6)$.  According to the  definition of deletion, we obtain  
 \begin{align*}
L\setminus x_3=& (x_1,x_2) \cap (x_2,x_4) \cap (x_4,x_5)  \cap (x_5,x_6,x_1)\\
=&(x_2x_5, x_1x_4, x_2x_4x_6).
\end{align*}
 It follows from \cite[Theorem 2.5 and Lemma 3.12]{SN} that $x_4(x_1, x_2x_6)$  is  normally torsion-free. Because  $\mathrm{gcd}(x_2x_5, x_1x_4)=1$ and $\mathrm{gcd}(x_2x_5, x_2x_4x_6)=x_2$, one can derive from Theorem  \ref{NTF1} that $L\setminus x_3$ is normally torsion-free, and so $\mathfrak{m}\setminus x_3=(x_1,x_2,x_4,x_5,x_6) \notin \mathrm{Ass}(R/(L\setminus x_3)^s)$.  Now, we show that 
 $\mathfrak{m}\setminus x_6 \notin \mathrm{Ass}(R/(L\setminus x_3)^s)$. A similar computation gives that 
 \begin{align*}
L\setminus x_6=& (x_1,x_2) \cap (x_2,x_3, x_4) \cap (x_4,x_5)  \cap (x_5,x_1)\\
=&(x_2x_5, x_1x_4, x_1x_3x_5).
\end{align*}
On account on \cite[Theorem 2.5 and Lemma 3.12]{SN}, we get  $x_1(x_4, x_3x_5)$  is  normally torsion-free. As  $\mathrm{gcd}(x_2x_5, x_1x_4)=1$ and $\mathrm{gcd}(x_2x_5, x_1x_3x_5)=x_5$,  Theorem  \ref{NTF1} implies that $L\setminus x_6$ is normally torsion-free, and hence  $\mathfrak{m}\setminus x_6=(x_1,x_2, x_3, x_4,x_5) \notin \mathrm{Ass}(R/(L\setminus x_6)^s)$. It follows now from \cite[Theorem 3.7]{SNQ}  that $DI(C_6)$ is normally torsion-free, as claimed. 

(ii)  Let  $E(C_9)=\{\{x_i, x_{i+1}\} : i=1, \ldots, 8\}\cup \{\{x_9,x_1\}\}.$ Then the dominating ideal of $C_9$ is given by 
\begin{align*}
DI(C_9)=& (x_1,x_2,x_3) \cap (x_2,x_3,x_4) \cap (x_3,x_4,x_5) \cap (x_4,x_5,x_6) \cap (x_5,x_6,x_7) \\ 
& \cap (x_6,x_7,x_8)\cap (x_7,x_8,x_9)\cap (x_8,x_9,x_1) \cap (x_9,x_1,x_2).
\end{align*}
Our strategy is to use \cite[Theorem 3.7]{SNQ}. To do this, put $v:=x_3x_6x_9$ and $L:=DI(C_9)$. It is routine to investigate that  $v\in \mathfrak{p}\setminus \mathfrak{p}^2$ for any $\mathfrak{p}\in \mathrm{Min}(L)$.  
  To complete the argument, one has to verify that  $\mathfrak{m}\setminus x_i \notin \mathrm{Ass}(R/(L\setminus x_i)^s)$ for all $s$ and $x_i \in \{x_3, x_6, x_9\}.$ Fix $s\geq 1$. We first  prove that $\mathfrak{m}\setminus x_3 \notin \mathrm{Ass}(R/(L\setminus x_3)^s)$, where $\mathfrak{m}=(x_1, \ldots, x_9)$.  It follows from the definition of deletion  that 
 \begin{align*}
L\setminus x_3=& (x_1,x_2) \cap (x_2,x_4) \cap (x_4,x_5)  \cap (x_5,x_6,x_7)  \cap (x_6,x_7,x_8)\cap (x_7,x_8,x_9)\\
& \cap  (x_8,x_9,x_1). 
\end{align*}
Set $\alpha:=x_1x_4x_7$ and $A:=L \setminus x_3$. It is easy to see that $\alpha \in \mathfrak{p}\setminus \mathfrak{p}^2$ for any $\mathfrak{p}\in \mathrm{Min}(A)$. We show that  $Q\setminus x_i \notin \mathrm{Ass}(R/(A\setminus x_i)^s)$ for all $s$ and $x_i \in \{x_1, x_4, x_7\}$, where $Q=\mathfrak{m}\setminus x_3$.  Fix $s\geq 1$. We first prove that   $Q\setminus x_1 \notin \mathrm{Ass}(R/(A\setminus x_1)^s)$.   Direct computation gives that 
$$A\setminus x_1= (x_6,x_7,x_8)\cap   (x_5,x_6,x_7)  \cap  (x_8,x_9) \cap  (x_4,x_5)  \cap (x_2).$$ 

It follows from Lemma \ref{NTF-Alexander Dual-Path} that $(x_6,x_7,x_8)\cap   (x_5,x_6,x_7)$ is normally torsion-free, and by virtue of Lemma \ref{NQBM}, we get $(x_6,x_7,x_8)\cap   (x_5,x_6,x_7) \cap  (x_8,x_9) \cap  (x_4,x_5)$ is normally torsion-free. According to Proposition \ref{Intersection}, one has 
$A\setminus x_1$ is normally torsion-free, and so  $Q\setminus x_1 \notin \mathrm{Ass}(R/(A\setminus x_1)^s)$. By a similar argument, one can show that 
$$A\setminus x_4=(x_2) \cap (x_5)  \cap  (x_6,x_7,x_8)\cap (x_7,x_8,x_9)   \cap  (x_8,x_9,x_1),$$
and
$$A\setminus x_7=(x_1,x_2) \cap (x_2,x_4) \cap (x_4,x_5)  \cap (x_5,x_6)  \cap (x_6,x_8)\cap (x_8,x_9),$$
are normally torsion-free. Hence,  we get $Q\setminus x_4 \notin \mathrm{Ass}(R/(A\setminus x_4)^s)$ and  $Q\setminus x_7 \notin \mathrm{Ass}(R/(A\setminus x_7)^s)$. We deduce from \cite[Theorem 3.7]{SNQ} that $A=L \setminus x_3$ is normally torsion-free, and so 
$\mathfrak{m}\setminus x_3 \notin \mathrm{Ass}(R/(L\setminus x_3)^s)$. In what follows, we give the sketch of the proof of normally torsion-freeness of 
$B:=L \setminus x_6$  and $C:=L \setminus x_9$. We can rapidly conclude   the following 
 \begin{align*}
L\setminus x_6=& (x_4,x_5) \cap (x_5,x_7) \cap (x_7,x_8)  \cap (x_1, x_2, x_3) \cap (x_2, x_3, x_4) \cap (x_1, x_8, x_9)\\
& \cap  (x_1, x_2, x_9), 
\end{align*}
and 
 \begin{align*}
L\setminus x_9=& (x_7,x_8) \cap (x_8,x_1) \cap (x_1,x_2)  \cap  (x_2,x_3,x_4) \cap (x_3,x_4,x_5) \cap (x_4,x_5,x_6)\\
& \cap (x_5,x_6,x_7).
\end{align*}
It is routine to check that $\alpha=x_1x_4x_7 \in \mathfrak{p}\setminus \mathfrak{p}^2$ for any $\mathfrak{p}\in \mathrm{Min}(B)$ (respectively,  
for any $\mathfrak{p}\in \mathrm{Min}(C)$). Furthermore, we have the following 
$$B\setminus x_1= (x_4,x_5) \cap (x_5,x_7) \cap (x_7,x_8)  \cap (x_8, x_9) \cap (x_9,  x_2)  \cap   (x_2, x_3), $$
$$B\setminus x_4= (x_5) \cap  (x_7, x_8)  \cap (x_2, x_3) \cap (x_1, x_8, x_9) \cap (x_1, x_2, x_9), $$
$$B\setminus x_7=  (x_5) \cap (x_8)  \cap (x_1, x_2, x_3) \cap (x_2, x_3, x_4) \cap (x_1, x_2, x_9), $$
$$C\setminus x_1=(x_8) \cap (x_2) \cap  (x_3,x_4,x_5) \cap (x_4,x_5,x_6) \cap (x_5,x_6,x_7), $$
$$C\setminus x_4=(x_7,x_8) \cap (x_8,x_1) \cap (x_1,x_2)  \cap  (x_2,x_3) \cap (x_3,x_5) \cap (x_5,x_6),$$
and 
$$C\setminus x_7=(x_8) \cap (x_1,x_2) \cap (x_5, x_6) \cap (x_2,x_3,x_4) \cap (x_3,x_4,x_5).$$
 By using  Lemma \ref{NTF-Alexander Dual-Path}, \cite[Theorem 2.5]{SN}, and Lemma \ref{NQBM}, we can conclude that 
 $B\setminus x_1$, $B\setminus x_4$, and  $B\setminus x_7$ (respectively,  $C\setminus x_1$, $C\setminus x_4$, and  $C\setminus x_7$)  are normally torsion-free.    This gives rise to  $\mathfrak{m}\setminus x_6 \notin \mathrm{Ass}(R/(I\setminus x_6)^s)$ (respectively,  $\mathfrak{m}\setminus x_9 \notin \mathrm{Ass}(R/(I\setminus x_9)^s)$. Finally,  \cite[Theorem 3.7]{SNQ} gives that  $DI(C_9)$ is normally torsion-free. 
\end{proof}


We are now ready to establish  the first main result of this section in the following theorem.   

\begin{theorem}\label{symbolic-cycles}
Let $C_n=(V(C_n), E(C_n))$ be a cycle graph of order $n\geq 3$.  Then $DI(C_n)^t=DI(C_n)^{(t)}$ for all $t \geq 1$ if and only if $n\in\{3,6,9\}$. In particular, $DI(C_n)$ is normally torsion-free if and only if $n\in\{3,6,9\}$. 
\end{theorem}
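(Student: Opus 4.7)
The plan is to prove the two directions of the equivalence separately; the ``in particular'' clause then follows from the standard fact that a square-free monomial ideal $I$ satisfies $I^t = I^{(t)}$ for all $t \geq 1$ if and only if $I$ is normally torsion-free (see \cite[Theorem 1.4.6]{HH1}). So both claims reduce to determining exactly when $DI(C_n)$ is normally torsion-free.

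For the direction $(\Leftarrow)$, I will verify that $DI(C_n)$ is normally torsion-free for each of the three values $n \in \{3, 6, 9\}$. When $n = 3$, every single vertex of $C_3 = K_3$ is a minimal dominating set, so $DI(C_3) = (x_1, x_2, x_3)$ is a prime ideal and trivially normally torsion-free. The cases $n = 6$ and $n = 9$ are exactly the content of Lemma~\ref{DI(C6+C9)}.

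For the direction $(\Rightarrow)$, I must show that for every $n \notin \{3, 6, 9\}$ there exist $t \geq 1$ and a monomial $m \in DI(C_n)^{(t)} \setminus DI(C_n)^t$. Using Proposition~\ref{transversal} together with Alexander duality, the minimal primes of $DI(C_n)$ are exactly $\mathfrak{p}_i = (x_{i-1}, x_i, x_{i+1})$ for $i = 1, \ldots, n$ (indices mod $n$), so
$$DI(C_n)^{(t)} = \bigcap_{i=1}^n (x_{i-1}, x_i, x_{i+1})^t,$$
and membership of $m$ in $DI(C_n)^{(t)}$ becomes a counting problem on variable multiplicities, while membership of $m$ in $DI(C_n)^t$ is a factorization problem: $m$ must decompose as a product of $t$ monomials, each supported on a minimal dominating set. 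My plan is a case analysis by residue mod $3$. For $n \equiv 1$ or $2 \pmod{3}$, the mismatch between $n$ and $3\lceil n/3 \rceil$ creates ``wasted capacity'' in any tiling of $C_n$ by minimum dominating sets, which I would exploit to build a witness monomial. For $n \equiv 0 \pmod{3}$ with $n \geq 12$, the ``regular'' dominating set $\{x_1, x_4, \ldots, x_{n-2}\}$ exists, but the long cycle admits several inequivalent minimum dominating sets whose interaction produces the witness. Small exceptional values ($n \in \{4, 5, 7, 8, 10, 11\}$) can be handled by direct computation in \textit{Macaulay2} \cite{GS}.

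The main obstacle is the uniform handling of $n \equiv 0 \pmod{3}$ with $n \geq 12$, where a naive degree argument does not suffice because here a tiling of $C_n$ by $n/3$ minimum dominating sets of size exactly $3$ is possible. I expect the required construction to parallel the strategy used in \cite[Theorem 3.9]{AB} for $NI(C_n)$, suitably transferred through Alexander duality, or alternatively to piggyback on the forthcoming explicit description of $\mathrm{Ass}^\infty(DI(C_n))$ in Theorem~\ref{NNTF-DI(CN)}, from which the existence of an embedded associated prime for $n \notin \{3, 6, 9\}$ would follow directly.
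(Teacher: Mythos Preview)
Your overall strategy matches the paper's: the $(\Leftarrow)$ direction is handled exactly as in the paper (prime ideal for $n=3$, Lemma~\ref{DI(C6+C9)} for $n=6,9$), and for $(\Rightarrow)$ the paper also proceeds by case analysis on $n \bmod 3$, exhibiting explicit witness monomials in $DI(C_n)^{(t)} \setminus DI(C_n)^t$. For $n \equiv 1 \pmod 3$ (with $t=2$) and $n \equiv 2 \pmod 3$ (with $t=3$) the paper uses precisely the degree argument you anticipate: the minimum dominating set has size $\lceil n/3 \rceil > n/3$, so every element of $DI(C_n)^t$ has degree at least $t\lceil n/3\rceil$, while a suitable monomial of smaller degree already lies in the symbolic power. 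No computer verification is needed for small $n$; the constructions are uniform (with $n=4$ handled by a one-line observation).

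There is, however, a genuine gap in your plan for $n \equiv 0 \pmod 3$ with $n \geq 12$. Your proposed fallback to Theorem~\ref{NNTF-DI(CN)} does not work: that theorem only asserts the \emph{containment} $\mathrm{Ass}^\infty(DI(C_n)) \subseteq \mathrm{Min}(DI(C_n)) \cup \{\mathfrak{m}\}$, not equality. It says nothing about whether $\mathfrak{m}$ actually appears as an associated prime, which is precisely what you would need. The paper resolves this case directly at $t=2$: writing $n=3k$, it sets $f = (\prod_{i=1}^{3k} x_i) / (x_4 x_8 \prod_{j=4}^k x_{3j})$, a square-free monomial of degree $2k+1$. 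Since the only degree-$k$ generators of $DI(C_{3k})$ are the three ``arithmetic-progression'' dominating sets $\{x_1,x_4,\ldots\}$, $\{x_2,x_5,\ldots\}$, $\{x_3,x_6,\ldots\}$, and $\mathrm{supp}(f)$ omits $x_4$, $x_8$, $x_{12}$ (one from each), none of them divides $f$, so $f \notin DI(C_{3k})^2$. On the other hand, the omitted variables are pairwise at distance $\geq 3$ on the cycle, so every closed neighborhood meets $\mathrm{supp}(f)$ in at least two variables, giving $f \in DI(C_{3k})^{(2)}$. This explicit construction is the missing ingredient in your outline.
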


\begin{proof} 
We first assume that $n\in\{3,6,9\}$. 
Let $V(C_n)=\{x_1, \ldots, x_n\}$ and $E(C_n)=\{\{x_i, x_{i+1}\} \; : i=1, \ldots, n-1\}\cup\{\{x_n,x_1\}\}$. 
It is easy to see  that  $DI(C_3)=(x_1, x_2,x_3)$ is normally torsion-free. According to  Lemma~\ref{DI(C6+C9)}, it follows that  $DI(C_6)$ and $DI(C_9)$ are also normally torsion-free. To complete the proof, we need to show that  $DI(C_n)^t \neq DI(C_n)^{(t)}$ for some $t\geq 1$ when $n\notin \{3,6,9\}$.  To do this, we divide the proof into three cases. We will repeatedly use the following facts in our proof. 
\begin{itemize}
\item[(i)] The domination number of  $C_{3k}$ is $k$, and  the domination number of $C_{3k+1}$ and $C_{3k+2}$ is $k+1$, see  \cite{HHRC} for more information. 
 \item[(ii)]  The associated primes of $DI(C_n)$ correspond to the closed neighborhoods of the vertices of $C_n$.
\end{itemize}
Now,  we proceed with the proof. 
\vspace{1mm}

\textbf{Case 1.} Let $n=3k$  for some $k \geq 4$. It follows from \cite[Lemma 4]{HHRC} that there are exactly three minimal dominating sets of $C_n$ of cardinality $k$, given by
\[
A=\{x_1, x_4, x_7, \ldots, x_{3k-2}\},  B=\{x_2, x_5, x_8, \ldots, x_{3k-1}\},\; \text{and}
\]
\[  C=\{x_3, x_6, x_9, \ldots, x_{3k}\}. 
\]
Our aim is to  construct an element $f$ with $\deg f= 2k+1$ such that none of  $A$ or $B$  or $C$ is in $\mathrm{supp}(f)$. To do this,  
set   $h:= x_4x_8\prod_{j=4}^kx_{3j}$ and $g:=\prod_{i=1}^{3k} x_i$.   
We claim that $f=g/h \notin  DI(C_{3k})^2$ but $f \in  DI(C_{3k})^{(2)}$. It is routine to check  that $\deg h = k-1$, and hence $\deg f= 3k-k+1= 2k+1$. In addition, from the constructions of $A$, $B$, and $C$, we know that  the only elements of degree $k$  in $\mathcal{G}(DI(C_{3k}))$ are 
\[
\{\prod_{x_i\in A}x_i, \prod_{x_i\in B}x_i, \prod_{x_i\in C}x_i\}.
\]
Since $x_4, x_8, x_{12} \notin \mathrm{supp}(f)$, we deduce that   $\prod_{x_i\in A}x_i \nmid f,$ $\prod_{x_i\in B}x_i\nmid f,$ and 
$\prod_{x_i\in C}x_i\nmid f$. Therefore, none of the elements of degree $k$ in  $DI(C_{3k})$ divides $f$, and consequently $f \notin DI(C_{3k})^2$.  Here, recall that 
$$DI(C_{3k})^{(2)}=\bigcap_{\mathfrak{p}\in \mathrm{Ass}(DI(C_{3k}))}\mathfrak{p}^2.$$
To see  $f \in  DI(C_{3k})^{(2)}$, note that the closed neighborhood set of each vertex of $C_{3k}$ is covered twice by $\mathrm{supp}(f)$.  Indeed, for each $x_i \in C_{3k}$, we have either $x_{i}x_{i+1} \in \mathrm{supp}(f)$, or $x_{i-1}x_{i} \in \mathrm{supp}(f)$, or $x_{i-1}x_{i+1} \in \mathrm{supp}(f)$. 
\vspace{1mm}

\textbf{Case 2.}  It follows from  $x_1 x_2 x_3\in DI(C_4)^{(2)}\setminus DI(C_4)^{2}$  that $DI(C_4)^2 \neq DI(C_4)^{(2)}$, and so the assertion is true  for $C_4$. Let $n=3k+1$ for some $k\geq 2$. Let  $f':= x_{3k+1} \prod_{i=0}^{k-1}x_{3i+1}x_{3i+2}.$
Then $\deg f'=2k+1$. By virtue of  the minimal degree of any  generator in $DI(C_{3k+1})^2$ is $2k+2$, this  shows that $f' \notin DI(C_{3k+1})^2$. Moreover, the closed neighborhood set of each vertex of $C_{3k+1}$ is covered twice by $ \mathrm{supp}(f')$, which yields  that $f' \in DI(C_{3k+1})^{(2)}$.
\vspace{1mm}

\textbf{Case  3.}  Let $n=3k+2$ for some $k\geq 1$ and  put  $f'':=\prod_{i=1}^{3k+2} x_i$. 
Then $\deg f''=3k+2$. In view of  the minimal degree of any  generator in $DI(C_{3k+2})^3$ is $3k+3$, we get  $f'' \notin DI(C_{3k+2})^3$. Furthermore, the closed neighborhood set  of each vertex of $C_{3k+2}$ is covered three times  by $\supp(f'')$, which gives  that $f'' \in DI(C_{3k+2})^{(3)}$.

Accordingly, the discussion above shows  that if $n\notin \{3,6,9\}$, then   $DI(C_n)^t \neq DI(C_n)^{(t)}$ for some $t\geq 1$. This finishes the  proof. 
\end{proof}


In general, it is well-known that the cover ideals of even cycle graphs are always normally torsion-free (cf.  \cite[Corollary 2.6]{GRV}). It has already been shown  in \cite[Theorem 4.5]{NQKR} that the cover ideals of odd cycle graphs are nearly normally torsion-free, see \cite[Definition 1.1]{NQKR}.  Furthermore, the associated primes of powers of cover ideals of odd cycle graphs have already   been  studied in \cite{NKA}, refer to the  proposition below.


\begin{proposition}(\cite[Proposition 3.6]{NKA})
 Suppose that $C_{2n+1}$ is  a cycle graph on the  vertex set $[2n+1]$, $R=K[x_1, \ldots, x_{2n+1}]$ is a  polynomial ring over a field $K$,
 and $\mathfrak{m}$ is the unique homogeneous maximal ideal  of $R$. Then  $$\mathrm{Ass}_R(R/(J(C_{2n+1}))^s)= \mathrm{Ass}_R(R/J(C_{2n+1}))\cup \{\mathfrak{m}\},$$  for all $s\geq 2$. In particular, 
 $$\mathrm{Ass}^\infty(J(C_{2n+1}))=\{(x_i, x_{i+1})~: ~ i=1, \ldots 2n\}\cup\{(x_{2n+1}, x_1)\}\cup \{\mathfrak{m}\}.$$
  \end{proposition}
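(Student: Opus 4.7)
The plan is to establish, with $J := J(C_{2n+1})$ and $\mathfrak{m} := (x_1, \ldots, x_{2n+1})$, the equality
\[
\mathrm{Ass}_R(R/J^s) = \mathrm{Min}(J) \cup \{\mathfrak{m}\}
\]
for every $s \geq 2$, from which the description of $\mathrm{Ass}^\infty(J)$ follows immediately. The minimal primes are read off the decomposition $J = \bigcap_{\{x_i,x_j\} \in E(C_{2n+1})} (x_i,x_j)$, and they always persist in $\mathrm{Ass}(R/J^s)$.

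To show $\mathfrak{m} \in \mathrm{Ass}(R/J^s)$ for each $s \geq 2$, I would produce an explicit colon witness. Put $f := x_1 x_2 \cdots x_{2n+1}$ and let $g := x_1 x_3 x_5 \cdots x_{2n+1}$, the squarefree monomial supported on the odd-indexed vertices; these $n+1$ vertices form a minimum vertex cover of $C_{2n+1}$, so $g \in \mathcal{G}(J)$. Define
\[
f_s := f \cdot g^{s-2}.
\]
Then $\deg f_s = (2n+1) + (n+1)(s-2) = s(n+1) - 1$, which is strictly less than the minimum generator degree $s \cdot \gamma(C_{2n+1}) = s(n+1)$ of $J^s$, so $f_s \notin J^s$. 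To see that $\mathfrak{m} f_s \subseteq J^s$, by cyclic symmetry it suffices to verify $x_1 f \in J^2$, which follows from the explicit factorization
\[
x_1 f = (x_1 x_3 x_5 \cdots x_{2n+1})\,(x_1 x_2 x_4 x_6 \cdots x_{2n}),
\]
each factor being a minimum vertex cover of $C_{2n+1}$. Multiplying through by $g^{s-2} \in J^{s-2}$ yields $x_i f_s \in J^s$ for every $i$, hence $\mathfrak{m} \subseteq (J^s : f_s)$. Since $f_s \notin J^s$, the colon is a proper ideal containing $\mathfrak{m}$ and therefore equals $\mathfrak{m}$.

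To rule out every other monomial prime, fix $\mathfrak{q} = (x_i : i \in W)$ with $W \subsetneq [2n+1]$, and invoke the standard localization principle for monomial ideals: $\mathfrak{q} \in \mathrm{Ass}(R/J^s)$ iff the maximal monomial ideal of $R_W := K[x_i : i \in W]$ is associated to the specialization of $J^s$ obtained by setting $x_j = 1$ for $j \notin W$. That specialization of $J(C_{2n+1})$ equals the cover ideal $J(H)$ of the induced subgraph $H = C_{2n+1}[W]$, since primes $(x_i, x_j)$ with $\{i,j\} \not\subseteq W$ become unit ideals and drop out of the intersection. Because $W$ omits at least one vertex of the cycle, $H$ is a disjoint union of paths, hence bipartite; the Gitler--Reyes--Villarreal theorem then gives that $J(H)$ is normally torsion-free, so $\mathrm{Ass}(R_W/J(H)^s) = \mathrm{Min}(J(H))$ consists only of edge primes of $H$. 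The maximal ideal of $R_W$ can appear among these only when $|W| = 2$ and $W$ is itself an edge of $C_{2n+1}$, in which case $\mathfrak{q}$ was already a minimal prime of $J$. The principal technical point is this final step, specifically identifying the specialization with $J(H)$ cleanly and invoking bipartite normal torsion-freeness; the remainder of the argument reduces to the degree computation and the single explicit cover factorization above.
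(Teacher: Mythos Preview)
The paper does not give its own proof of this proposition; it is quoted verbatim from \cite[Proposition~3.6]{NKA} as background and motivation for Theorem~\ref{NNTF-DI(CN)}. So there is no proof in the paper to compare against.

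Your argument is correct and complete. The three ingredients---the explicit witness $f_s=f\,g^{s-2}$ with the factorization $x_1f=(x_1x_3\cdots x_{2n+1})(x_1x_2x_4\cdots x_{2n})$ showing $\mathfrak m\in\mathrm{Ass}(R/J^s)$, the degree bound $\deg f_s=s(n+1)-1<s(n+1)$ forcing $f_s\notin J^s$, and the monomial localization reducing any proper $\mathfrak q$ to the cover ideal of an induced bipartite subgraph where \cite{GRV} applies---are exactly what is needed, and your use of cyclic symmetry to reduce to $x_1f\in J^2$ is legitimate since the rotation automorphism fixes both $f$ and $J$. One minor notational slip: you write $\gamma(C_{2n+1})=n+1$, but in this paper $\gamma$ is reserved for the domination number; the quantity you want (and correctly compute as $n+1$) is the vertex cover number, often denoted $\tau(C_{2n+1})$.
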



In what follows, our aim is to establish that a similar result as above holds for  the  associated primes of powers of dominating ideals of cycle graphs of any order. 
 Before stating the next theorem, we  recall the definition of the  {\it  monomial localization}   of a monomial ideal with respect to a monomial prime ideal.
     Let $I$ be a monomial ideal in a polynomial ring $R=K[x_1, \ldots, x_n]$ over a field $K$. In addition, we  denote by $V^*(I)$ the set of monomial prime ideals containing $I$. Let $\mathfrak{p}=(x_{i_1}, \ldots, x_{i_r})$ be a monomial prime ideal. The {\it monomial localization} of $I$ with respect to $\mathfrak{p}$, denoted by $I(\mathfrak{p})$, is the ideal in the polynomial ring $R(\mathfrak{p})=K[x_{i_1}, \ldots, x_{i_r}]$ 
 which is obtained from $I$ by using  the $K$-algebra homomorphism $R\rightarrow R(\mathfrak{p})$  with $x_j\mapsto 1$ 
 for all $x_j\notin \{x_{i_1}, \ldots, x_{i_r}\}$.  It is well-known that 
 $\mathfrak{p}\in \mathrm{Ass}_R(R/I)$ if and only if $\mathfrak{m}_\mathfrak{p}\in  \mathrm{Ass}(I(\mathfrak{p}))$, 
 where $\mathfrak{m}_\mathfrak{p}$ is the graded maximal ideal of $R(\mathfrak{p})$. For  ease of reference, we recall the following theorem   that will be used to prove Theorem~\ref{NNTF-DI(CN)}.


 \begin{theorem} (\cite[Theorem 3.1]{NQBM})  \label{I+xH} 
Let $I$ and $H$  be two  normal square-free monomial ideals in a  polynomial ring $R=K[x_1, \ldots, x_n]$ 
such that $I+H$ is normal.
  Let  $x_{c} \in \{x_1, \ldots, x_n\}$ be  a variable with  
 $\gcd (v,x_c)=1$ for all $v\in \mathcal{G}(I)\cup  \mathcal{G}(H)$. Then $L:=I+x_cH$ is normal. 
\end{theorem}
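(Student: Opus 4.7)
The plan is to show that every monomial $u\in\overline{L^k}$ lies in $L^k$ for each $k\geq 1$. Write $u=x_c^a w$ with $x_c\nmid w$, and choose $N\geq 1$ with $u^N\in L^{Nk}$. Because the hypothesis $\gcd(v,x_c)=1$ for every $v\in\mathcal{G}(I)\cup\mathcal{G}(H)$ forces $x_c\notin\mathrm{supp}(I)\cup\mathrm{supp}(H)$, the expansion
\[
L^{Nk}\;=\;\sum_{i=0}^{Nk} x_c^i\,I^{Nk-i}H^i
\]
has minimal generators of the form $x_c^i v$ with $v\in I^{Nk-i}H^i$ and $\gcd(v,x_c)=1$. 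Monomial divisibility of $u^N=x_c^{aN}w^N$ by such a generator then forces both $i\leq aN$ and $v\mid w^N$, so $w^N\in I^{Nk-i}H^i$.

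I would next exploit the normality of $I+H$: the containment $I^{Nk-i}H^i\subseteq(I+H)^{Nk}$ yields $w\in\overline{(I+H)^k}=(I+H)^k$, so some product $g_1\cdots g_k$ with $g_\ell\in\mathcal{G}(I)\cup\mathcal{G}(H)$ divides $w$. Writing $j$ for the number of such factors drawn from $\mathcal{G}(H)$, we obtain $w\in I^{k-j}H^j$. When $a\geq k$, the bound $j\leq k\leq a$ is automatic, and
\[
u\;=\;x_c^{a-j}\,(x_c^j w)\;\in\;x_c^j\,I^{k-j}H^j\;\subseteq\;L^k,
\]
which handles the easy regime.

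The genuine work lies in the case $a<k$, where one has to select a decomposition of $w$ that uses at most $a$ factors from $H$. Here the inequality $i\leq aN$ from the first step is essential: translating it into the Newton-polyhedron statement
\[
\exp(w)\;\in\;(k-s)\,\mathrm{NP}(I)+s\,\mathrm{NP}(H)\qquad\text{with }s=i/N\in[0,a],
\]
I would combine the individual normalities of $I$ and $H$ with a rounding argument inside the Minkowski sum to extract an integer $j\in[0,a]$ with $w\in\overline{I^{k-j}H^j}$, and then invoke the normality hypothesis on $I+H$ once more to promote membership in $\overline{I^{k-j}H^j}$ to membership in $I^{k-j}H^j$ itself. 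This gives $u=x_c^{a-j}(x_c^j w)\in L^k$ exactly as in the easy case.

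I expect the polyhedral rounding step to be the chief technical obstacle: converting the rational bound $s=i/N\leq a$ into an honest integer decomposition $w\in I^{k-j}H^j$ with $j\leq a$ and not merely a membership in the integral closure. This is precisely where the combined strength of all three normality assumptions, on $I$, on $H$, and on $I+H$, is brought to bear.
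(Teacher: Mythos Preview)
The paper does not prove this statement; it merely quotes it from \cite[Theorem~3.1]{NQBM} for later use, so there is no in-paper proof to compare against. I can only assess your argument on its own merits.

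Your reduction is sound through the easy regime: the binomial expansion $L^{Nk}=\sum_i x_c^{\,i}I^{Nk-i}H^i$ is correct because $x_c\notin\supp(I)\cup\supp(H)$, and from $u^N\in L^{Nk}$ you legitimately extract an index $i\le aN$ with $w^N\in I^{Nk-i}H^i$. The case $a\ge k$ is then dispatched cleanly via the normality of $I+H$.

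The gap is exactly where you flag it, and it is a genuine one. In the case $a<k$ you propose to (1) round the rational parameter $s=i/N\le a$ to an integer $j\le a$ with $w\in\overline{I^{k-j}H^j}$, and then (2) ``invoke the normality hypothesis on $I+H$ once more to promote membership in $\overline{I^{k-j}H^j}$ to membership in $I^{k-j}H^j$ itself.'' Step~(2) does not follow: normality of $I+H$ says $\overline{(I+H)^k}=(I+H)^k$, which only yields $w\in I^{k-j'}H^{j'}$ for \emph{some} $j'\in\{0,\dots,k\}$, with no control forcing $j'\le a$. You already used exactly this consequence in the easy case, and it gave you nothing about the size of $j$; invoking it again cannot do more. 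Step~(1) is also not justified as written: from $\exp(w)\in(k-s)\,\mathrm{NP}(I)+s\,\mathrm{NP}(H)$ you cannot in general pass to an integer $j\in\{\lfloor s\rfloor,\lceil s\rceil\}$ with $\exp(w)\in(k-j)\,\mathrm{NP}(I)+j\,\mathrm{NP}(H)$, since the family of Minkowski sums $(k-t)\,\mathrm{NP}(I)+t\,\mathrm{NP}(H)$ is not monotone in $t$ unless one of the Newton polyhedra contains the other. So as it stands the hard case is an outline with two unproven (and, for~(2), incorrect) steps rather than a proof; you would need a different mechanism to force $j\le a$, and that mechanism is precisely what the cited paper supplies.
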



We are now  in the position to  state our next result related to the associated primes of powers of dominating ideals  of  cycle graphs.

 \begin{theorem}\label{NNTF-DI(CN)}
 Let $C_n$ be a cycle graph of order $n$ with  $V(C_n)=\{x_1, \ldots, x_n\}$ and $E(C_n)=\{\{x_i, x_{i+1}\}~:~ i=1, \ldots, n-1\} \cup \{\{x_n, x_1\}\}$. 
 Let $DI(C_n)$ be the  dominating  ideal of $C_n$ in the polynomial ring $R=K[x_1, \ldots,x_n]$.   Then the following statements hold.
 \begin{itemize}
 \item[(i)] The monomial ideal $DI(C_n)$ satisfies the persistence property.
 \item[(ii)] For all $s\geq 1$, we have 
 $$\mathrm{Ass}(DI(C_n)^s) \subseteq \{(x_i, x_{i+1}, x_{i+2})~:~i=1, \ldots, n\} \cup \{\mathfrak{m}\},$$ 
  where $\mathfrak{m}=(x_1, \ldots, x_n)$ and   $x_{n+1}$ (respectively, $x_{n+2}$) represents $x_1$ (respectively, $x_2$).  
  \item[(iii)] $\mathrm{Ass}^{\infty }(DI(C_n))\subseteq \{(x_i, x_{i+1}, x_{i+2})~:~i=1, \ldots, n\}  \cup \{\mathfrak{m}\}.$
 \end{itemize}
  \end{theorem}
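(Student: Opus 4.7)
My plan is to address the three claims in the order (ii), (iii), (i), since the first two constrain the possible associated primes and the third then reduces to a single persistence question about the maximal ideal. For (ii), the workhorse is the monomial-localization characterization: $\mathfrak{p} \in \mathrm{Ass}(DI(C_n)^s)$ if and only if $\mathfrak{m}_\mathfrak{p} \in \mathrm{Ass}(DI(C_n)(\mathfrak{p})^s)$. Fix a monomial prime $\mathfrak{p}$ with variable set $P \subsetneq V(C_n)$ that is not one of the claimed minimal primes $(x_{i-1}, x_i, x_{i+1})$, and set $D := V(C_n) \setminus P$. Deleting $D$ from the cycle yields a disjoint union of paths $P^{(1)}, \ldots, P^{(k)}$ with $m_l := |V(P^{(l)})|$. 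The primary components $(x_{i-1}, x_i, x_{i+1})$ of $DI(C_n)$ that survive the localization are exactly those indexed by interior vertices of some $P^{(l)}$, so
\[
DI(C_n)(\mathfrak{p}) \;=\; \bigcap_{l:\, m_l \geq 3} I_3(P_{m_l})^{\vee},
\]
an intersection with pairwise disjoint supports. By Proposition~\ref{NTF-Alexander Dual-Path} each factor is normally torsion-free, and Proposition~\ref{Intersection} extends this to the whole intersection. Hence $\mathrm{Ass}(DI(C_n)(\mathfrak{p})^s) = \mathrm{Min}(DI(C_n)(\mathfrak{p}))$, whose members are generated by three variables drawn from a single path $P^{(l)}$. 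Since $\mathfrak{p}$ is not of this shape, $\mathfrak{m}_\mathfrak{p}$ strictly contains every such minimal prime (or fails to involve every variable of $P$ when some path has $m_l \leq 2$), so it is not associated and $\mathfrak{p} \notin \mathrm{Ass}(DI(C_n)^s)$. Statement (iii) is then immediate by taking $s$ beyond Brodmann's stability index.

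For (i), the minimal primes $(x_i, x_{i+1}, x_{i+2})$ lie in $\mathrm{Ass}(R/DI(C_n)^k)$ for every $k$, so by (ii) persistence reduces to the single implication: if $\mathfrak{m} \in \mathrm{Ass}(R/DI(C_n)^k)$, then $\mathfrak{m} \in \mathrm{Ass}(R/DI(C_n)^{k+1})$. My plan is to derive this from normality of $DI(C_n)$, since a normal monomial ideal has the strong persistence property, and strong persistence implies persistence. To prove normality I would leverage the calculation above — every proper monomial localization $DI(C_n)(\mathfrak{p})$ is normally torsion-free, hence normal — and then glue these local normalities together by writing $DI(C_n) = I + x_c H$ for a suitably chosen variable $x_c$, splitting the minimal dominating sets of $C_n$ according to whether they contain $x_c$, and invoking Theorem~\ref{I+xH}. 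The ingredient ideals $I$, $H$, and $I + H$ should be normal because they are closely related to dominating-type ideals of path subgraphs of $C_n$, for which the $3$-path Alexander-dual picture of Proposition~\ref{NTF-Alexander Dual-Path} already yields normal torsion-freeness.

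The main obstacle will be this last step, namely organizing the global normality argument for $DI(C_n)$. Monomial localization gives normality at every proper prime, but $\mathfrak{m}$ encodes the cyclic identification that no single proper localization sees, so one genuinely needs the decomposition $DI(C_n) = I + x_c H$ together with a nontrivial check that $I$, $H$, and $I + H$ are normal and that $x_c$ divides no generator of $I$ or $H$. An alternative route — trying to produce, from a witness monomial $u$ for $\mathfrak{m} \in \mathrm{Ass}(R/DI(C_n)^k)$, a witness $u' = uw$ for $\mathfrak{m} \in \mathrm{Ass}(R/DI(C_n)^{k+1})$ — is tempting but subtle, since $uw$ may land in $DI(C_n)^{k+1}$ even when $u \notin DI(C_n)^k$. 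I would therefore invest the bulk of the work into making the Theorem~\ref{I+xH} decomposition airtight.
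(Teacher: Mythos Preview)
Your proposal is correct and follows essentially the same path as the paper: part (ii) via monomial localization to a disjoint intersection of $I_3(P_m)^\vee$'s (Proposition~\ref{NTF-Alexander Dual-Path} plus Proposition~\ref{Intersection}), part (iii) as a corollary, and part (i) by proving $DI(C_n)$ is normal through a Theorem~\ref{I+xH} decomposition with $x_c = x_n$. The paper's concrete execution of the last step is $DI(C_n) = x_n A + A\cap B$ with $A = \bigcap_{i=1}^{n-3}(x_i,x_{i+1},x_{i+2})$ and $B = (x_{n-2},x_{n-1})\cap(x_{n-1},x_1)\cap(x_1,x_2)$; here $A$ and $A + (A\cap B) = A$ are normal by Proposition~\ref{NTF-Alexander Dual-Path}, while normality of $A\cap B$ requires a second application of Theorem~\ref{I+xH} (splitting off $x_1$), so be prepared to iterate the decomposition once more rather than finishing in a single step.
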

 
 \begin{proof}
For convenience of notation,  put $L:=DI(C_{n})$.  Since all claims are true for 
$DI(C_3)=(x_1,x_2,x_3)$, we assume that $n\geq 4$. 
We first show that   $\mathrm{Ass}(L^s) \subseteq \mathrm{Min}(L) \cup \{\mathfrak{m}\}$ for all $s\geq 1$. To  see this, fix $s\geq 1$, and pick an arbitrary element $\mathfrak{p}\in \mathrm{Ass}(L^s)$. If $\mathfrak{p}=\mathfrak{m}$, then there is nothing to show. We thus assume that $\mathfrak{p}\neq \mathfrak{m}$. 
Without loss of generality, one may  assume that $\mathfrak{p}=(x_i~:~ i=1,  \ldots, r)$,  where $3\leq  r<n$. 
Then we get  $L(\mathfrak{p})=\cap_{i=1}^{\ell}I^{\vee}_3(P_i)$, where $P_1, \ldots, P_\ell$ are some disjoint paths in $C_{n}$.   By virtue of 
Lemma \ref{NTF-Alexander Dual-Path},   one can immediately conclude that  $I^{\vee}_3(P_i)$ is normally torsion-free for  all $i=1, \ldots, \ell$. In  the  light  of  Proposition \ref{Intersection}, we get  $L(\mathfrak{p})$ is normally torsion-free, and hence $\mathfrak{p}\in \mathrm{Min}(L(\mathfrak{p}))$.
On the other hand, based on \cite[Lemma 4.6]{RNA}(viii), we know that 
$\mathfrak{p}\in \mathrm{Ass}(L^s)$  if and only if  $\mathfrak{p}\in \mathrm{Ass}(R(\mathfrak{p})/(L(\mathfrak{p})^s).$
Consequently, one obtains    $\mathfrak{p}\in \mathrm{Ass}(R(\mathfrak{p})/(L(\mathfrak{p})^s)$.
 We therefore get  $\mathfrak{p}=(x_{\alpha}, x_{\alpha+1}, x_{\alpha+2})$ for some $1\leq \alpha \leq r-2$, and thus $\mathfrak{p}\in \mathrm{Min}(L)$.   Accordingly, we derive that $\mathrm{Ass}(L^s) \subseteq \mathrm{Min}(L) \cup \{\mathfrak{m}\}$ for all $s\geq 1$.  This proves (ii). 
 
 We now show that $L$ has the persistence property. Note that, in view of   the proof of  \cite[Corollary 4.10]{NQKR},   every normal monomial ideal has the  persistence property. Hence, it is sufficient for us to demonstrate  that $L$ is normal. Our strategy is to use Theorem \ref{I+xH}. To accomplish this, 
 set $$A:=\bigcap_{i=1}^{n-3}(x_i, x_{i+1}, x_{i+2}) ~ \text{and} ~  B:=(x_{n-1}, x_{n-2})\cap (x_{n-1}, x_1) \cap (x_1, x_2).$$
      Then we have $L=A\cap (x_nR+ B)=x_nA+A\cap B$. Due to  $A$ is the Alexander dual of the $3$-path ideal  of   a  path  graph $P=(V(P), E(P))$  with the vertex set  $V(P)=\{x_1, \ldots, x_{n-1}\}$ and the following edge set $$E(P)=\{\{x_i, x_{i+1}\}~:~ i=1, \ldots, n-2\},$$   Lemma 
      \ref{NTF-Alexander Dual-Path}  gives that $A$ is normally torsion-free, and by  \cite[Theorem 1.4.6]{HH1}, we deduce that $A$ is normal. Also, note that $A+A\cap B=A$ is normal. In what follows, our aim is to establish $A\cap B$ is normal.   It is easy  to check that
 \begin{align*} 
 A\cap B& = \bigcap_{i=2}^{n-4}(x_i, x_{i+1}, x_{i+2})\cap (x_{n-1}, x_{n-2})\cap (x_{n-1}, x_1) \cap (x_1, x_2)  \\
  & =    x_1(\bigcap_{i=2}^{n-4}(x_i, x_{i+1}, x_{i+2})\cap (x_{n-1}, x_{n-2}))   \\
  &\quad     + x_2R \cap   \bigcap_{i=3}^{n-4}(x_i, x_{i+1}, x_{i+2})\cap (x_{n-1}, x_{n-2})\cap (x_{n-1}, x_1). 
 \end{align*}
  To simplify the notation, put $C:=\bigcap_{i=2}^{n-4}(x_i, x_{i+1}, x_{i+2})\cap (x_{n-1}, x_{n-2})$ and 
 $$D:=x_2R \cap   \bigcap_{i=3}^{n-4}(x_i, x_{i+1}, x_{i+2})\cap (x_{n-1}, x_{n-2})\cap (x_{n-1}, x_1).$$ 
 This gives rise to  $A\cap B=x_1C+D$.  As   the monomial ideal $\bigcap_{i=2}^{n-4}(x_i, x_{i+1}, x_{i+2})$ can be viewed as 
 the Alexander dual of the  $3$-path ideal  of  a  path  graph,  by  Lemma \ref{NTF-Alexander Dual-Path}, is normally torsion-free, and on account of 
 Lemma  \ref{NQBM}, we can conclude that $C$ is normally torsion-free, and based on   \cite[Theorem 1.4.6]{HH1}, is normal. Moreover, since $D \subseteq C$, one has $C+D=C$ is normal as well. Now, a similar argument shows that the monomial ideal $\bigcap_{i=3}^{n-4}(x_i, x_{i+1}, x_{i+2})$ is normally torsion-free, and after two times using  Lemma  \ref{NQBM}, we can derive that $\bigcap_{i=3}^{n-4}(x_i, x_{i+1}, x_{i+2})\cap (x_{n-1}, x_{n-2})\cap (x_{n-1}, x_1)$ is normally torsion-free. It follows now from 
 \cite[Lemma 2.5]{ANKRQ} that $D$ is normal.  Thanks to  Theorem \ref{I+xH}, we get  $A\cap B= x_1C+D$ is normal. Due to $A$ and  $A\cap B$ are  normal, Theorem  \ref{I+xH} yields that  $L=x_nA+A\cap B$ is normal, and so has the persistence property, as we claimed in  (i).
 
 Finally, combining (i) and (ii) leads us to    $$\mathrm{Ass}^{\infty }(DI(C_n))\subseteq \{(x_i, x_{i+1}, x_{i+2})~:~i=1, \ldots, n\}  \cup \{\mathfrak{m}\}.$$ This demonstrates (iii), and the proof is complete.  
 \end{proof}


Let $G$ be a simple finite graph. We finish this paper with a short argument on the relation between  prime monomial ideals  associated to  $\mathrm{Ass}(DI(G)^s)$ for some $s \geq 1$, and prime monomial ideals associated to  $\mathrm{Ass}(DI(G_\mathfrak{p})^s)$, where 
 $G_\mathfrak{p}$ stands for the induced graph on $\mathfrak{p}$, i.e., the graph with vertex set  $\mathfrak{p}$, and edge set 
 $E({G_{\mathfrak{p}}})=\{e\in E(G) \mid e\subseteq \mathfrak{p}\}$.

Francisco, Ha, and Van Tuyl, in \cite[Lemma 2.11]{FHV2}, showed that if $\mathcal{H}$ is a finite simple hypergraph on $V=\{x_1, \ldots, x_n\}$ with cover ideal $J(\mathcal{H})\subseteq R=k[x_1, \ldots, x_n]$, then for all $d\geq 1$, 
  $$P=(x_{i_1}, \ldots, x_{i_r})\in \mathrm{Ass}(R/J(\mathcal{H})^d) \Leftrightarrow  P=(x_{i_1}, \ldots, x_{i_r})\in \mathrm{Ass}(k[P]/J(\mathcal{H}_P)^d),$$
  where $k[P]=k[x_{i_1}, \ldots, x_{i_r}]$, and  $\mathcal{H}_P$ is the induced hypergraph of $\mathcal{H}$ on the vertex set $P=\{x_{i_1}, \ldots, x_{i_r}\}\subseteq V$. Here,  we provide a counterexample which shows that one cannot mimic this result for dominating ideals of simple finite graphs. To do this, let $G=(V(G), E(G))$ be a simple finite graph with the vertex set $V(G)=\{x_1, x_2, x_3, x_4, x_5, x_6\}$ and the following edge set 
\begin{align*}
  E(G)= & \{\{x_1, x_2\}, \{x_1, x_3\}, \{x_1, x_6\}, \{x_2, x_4\},\{x_2,x_5\}, \{x_3,x_4\}, \\ 
  &\{x_3,x_5\}, \{x_4,x_6\}\}.
  \end{align*}
  One can easily see that the dominating ideal of $G$ is given by 
  \begin{align*}
  DI(G)= & (x_1,x_2,x_3,x_6) \cap (x_1,x_2,x_4,x_5)\cap (x_1,x_3,x_4,x_5)\cap (x_2,x_3,x_4,x_6)\\ 
  &\cap (x_2,x_3,x_5)\cap (x_1,x_4,x_6).
  \end{align*}
  Take  $\mathfrak{p}:=(x_1,x_2,x_3,x_6) \in \mathrm{Ass}(DI(G)^2)$. It follows from the definition that $V(G_\mathfrak{p})=\{x_1,x_2,x_3,x_6\}$ and 
  $E(G_\mathfrak{p})=\{\{x_1, x_2\}, \{x_1, x_3\}, \{x_1, x_6\}\}$. In particular, the dominating ideal of $G_\mathfrak{p}$ is given by 
  $$DI(G_\mathfrak{p})= (x_1,x_2) \cap (x_1,x_3)\cap (x_1,x_6)=(x_1, x_2x_3x_6).$$ 
  We can conclude from \cite[Theorem 2.5]{SN} that $DI(G_\mathfrak{p})$ is normally torsion-free, and so 
  $\mathfrak{p} \notin \mathrm{Ass}(DI(G_\mathfrak{p})^2)$. On the other hand, $\mathfrak{q}:=(x_1,x_6)\in \mathrm{Ass}(DI(G_\mathfrak{p})^2)$, while $\mathfrak{q} \notin \mathrm{Ass}(DI(G)^2)$.  

In what follows, we turn our attention to discuss the nature of  $G_{\mathfrak{p}}$. To accomplish this, we first recall the definition of  partial $t$-cover ideals which  has already been introduced  in \cite{BBV}. 
\begin{definition} (\cite[Definition 1.1]{BBV})  
Suppose that $G$ is a finite simple
graph on the vertex set $V(G) = \{x_1, x_2, \ldots, x_n\}$
 with the edge set $E(G)$. Also, for any $x\in V(G)$,  let
$N(x) = \{y~:~ \{x, y\}\in  E(G)\}$ denote the set of neighbors of $x$. Fix an integer $t\geq 1$. The {\it partial $t$-cover ideal} of $G$  is the monomial ideal
$$J_t(G)=\bigcap_{x\in V(G)}\left(\bigcap_{\{x_{i_1}, \ldots, x_{i_t}\} \subseteq N(x)}(x, x_{i_1}, \ldots, x_{i_t})\right).$$
\end{definition}

It should be noted that when $t=1$, the  construction above  is simply the cover ideal of a finite simple graph $G$. 
In fact, Bhat, Biermann, and Van Tuyl, in  \cite[Lemma 2.4]{BBV}, proved the following lemma:

\begin{lemma} (\cite[Lemma 2.4]{BBV})
Let $G$ be a graph on the vertex set  $\{x_1, \ldots, x_n\}$, and $J_t(G)$ be the partial $t$-cover ideal  of $G$. If  $\mathfrak{p}=(x_{i_1}, \ldots, x_{i_r})\in \mathrm{Ass}(J_t(G)^s)$, then $G_\mathfrak{p}$ is connected. 
\end{lemma}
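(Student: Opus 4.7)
The plan is to prove the contrapositive: if $G_\mathfrak{p}$ is disconnected, then $\mathfrak{p}\notin \mathrm{Ass}(R/J_t(G)^s)$ for every $s\geq 1$. The workhorse will be the monomial localization criterion for associated primes recalled just before Theorem~\ref{I+xH} (and used in the proof of Theorem~\ref{NNTF-DI(CN)}), which says that $\mathfrak{p}\in \mathrm{Ass}(R/J_t(G)^s)$ if and only if $\mathfrak{m}_{\mathfrak{p}}\in \mathrm{Ass}(R(\mathfrak{p})/J_t(G)(\mathfrak{p})^s)$. It therefore suffices to exclude the graded maximal ideal after monomial localization.

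The first technical step would be to identify $J_t(G)(\mathfrak{p})$ with $J_t(G_\mathfrak{p})$. Since $J_t(G)$ is presented as an intersection of monomial primes of the form $(x,x_{i_1},\ldots,x_{i_t})$, the monomial localization collapses to the unit ideal exactly those primes containing a variable outside $\mathfrak{p}$, and one obtains
\[
J_t(G)(\mathfrak{p}) \;=\; \bigcap_{x\in\mathfrak{p}}\;\bigcap_{\{y_1,\ldots,y_t\}\subseteq N_G(x)\cap\mathfrak{p}} (x,y_1,\ldots,y_t) \;=\; J_t(G_{\mathfrak{p}}),
\]
with the edge case $|N_G(x)\cap\mathfrak{p}|<t$ contributing nothing to either side. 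After this reduction I may assume $\mathfrak{p}=\mathfrak{m}$, $R=R(\mathfrak{p})$, and $V(G)=V_1\sqcup V_2$ with no edge of $G$ joining $V_1$ to $V_2$. Since every neighborhood of a vertex of $V_i$ lies inside $V_i$, the defining intersection splits as $J_t(G)=J_t(G_1)\cap J_t(G_2)$, where $G_i$ is the induced subgraph on $V_i$ and $J_t(G_i)\subseteq K[V_i]$. The variable supports being disjoint, \cite[Proposition 1.2.1]{HH1} yields $J_t(G)=J_t(G_1)\cdot J_t(G_2)$, and hence $J_t(G)^s=J_t(G_1)^s\cdot J_t(G_2)^s$ for all $s\geq 1$.

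The final step is to rule out $\mathfrak{m}\in \mathrm{Ass}(R/J_t(G)^s)$. Suppose a monomial $u=u_1 u_2$ with $u_i\in K[V_i]$ satisfies $\mathfrak{m}\cdot u\subseteq J_t(G)^s$. Pick any $x\in V_1$ (possible because $V_1\neq\emptyset$); then $xu\in J_t(G_1)^s\cdot J_t(G_2)^s$ is divisible by a monomial generator of the form $v_1 v_2$ with $v_i$ a generator of $J_t(G_i)^s$, and disjointness of supports forces $v_1\mid xu_1$ and $v_2\mid u_2$, so $u_2\in J_t(G_2)^s$. A symmetric argument, using some $y\in V_2$, yields $u_1\in J_t(G_1)^s$. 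Multiplying the two memberships gives $u=u_1 u_2\in J_t(G_1)^s\cdot J_t(G_2)^s=J_t(G)^s$, contradicting the hypothesis that $u$ witnesses $\mathfrak{m}$. The step I expect to require most care is the identification $J_t(G)(\mathfrak{p})=J_t(G_\mathfrak{p})$, specifically bookkeeping for vertices $x\in \mathfrak{p}$ whose $G$-neighborhood meets $\mathfrak{p}$ in fewer than $t$ elements, and the analogous degenerate cases where some $J_t(G_i)$ equals the unit ideal: in the latter case the corresponding half of the divisibility conclusion becomes vacuous, but the symmetric half still forces $u\in J_t(G)^s$, so the argument carries through.
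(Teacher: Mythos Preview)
The paper does not supply its own proof of this lemma; it is quoted verbatim as \cite[Lemma 2.4]{BBV} and used as a known result to motivate Question~\ref{OPEN-QUESTION}. So there is no in-paper argument to compare against.

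That said, your proof is correct and is essentially the standard argument one would expect. The identification $J_t(G)(\mathfrak{p})=J_t(G_{\mathfrak{p}})$ is exactly right: since $J_t(G)$ is an intersection of monomial primes, monomial localization at $\mathfrak{p}$ simply discards the components not contained in $\mathfrak{p}$, and the surviving components are precisely those of $J_t(G_{\mathfrak{p}})$ because $N_{G_{\mathfrak{p}}}(x)=N_G(x)\cap\mathfrak{p}$. The splitting $J_t(G)=J_t(G_1)\,J_t(G_2)$ for a disconnected $G$ and the colon-ideal computation excluding $\mathfrak{m}$ are both clean. Your handling of the degenerate cases is also adequate: if some $J_t(G_i)$ is the unit ideal, then $J_t(G)^s$ lives in the variables of the other piece, and any variable from $V_i$ is a nonzerodivisor on $R/J_t(G)^s$, so $\mathfrak{m}$ cannot be associated. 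One cosmetic point: when you pass from ``$G_\mathfrak{p}$ disconnected'' to ``$V(G)=V_1\sqcup V_2$ with no edges between them'' you are implicitly replacing $G$ by $G_\mathfrak{p}$ after the localization step; it would read more smoothly to say this explicitly, but the logic is sound.
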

In this direction,  we leave the following open question: 
\begin{question}\label{OPEN-QUESTION}
 Let $G$ be a connected simple finite graph  on the vertex set $V(G)=\{x_1, \ldots, x_n\}$, and let $DI(G)$ be the  dominating  ideal of $G$. If  
 $\mathfrak{p}=(x_{i_1}, \ldots, x_{i_r})\in \mathrm{Ass}(DI(G)^s)$ for some $s\geq 1$, then can we deduce that $G_\mathfrak{p}$ is connected?
\end{question}




\end{document}